\documentclass[12p]{article}
\usepackage{amscd}
\usepackage{amsthm}
\usepackage {geometry}
\usepackage {fancyhdr}
\usepackage {amsmath,amsthm,amssymb}
\usepackage {graphicx}
\usepackage {hyperref}
\usepackage{amsfonts}
\usepackage{tabularx}
\usepackage{enumerate}
\usepackage{undertilde}
\usepackage[normalem]{ulem}
\usepackage[usenames,dvipsnames,svgnames,table]{xcolor}
\usepackage{relsize}
 \usepackage[titletoc,title]{appendix}

\newtheorem{theorem}{Theorem}[section]
\theoremstyle{definition}
\newtheorem {definition}[theorem]{Definition}

\newtheorem{lemma}[theorem]{Lemma}
\newtheorem{proposition}[theorem]{Proposition}

\theoremstyle{remark}\newtheorem{remark}[theorem]{Remark}

\newcommand{\E}{\mathbb{E}}

\newcommand{\Z}{\mathbb{Z}}
\newcommand{\R}{\mathbb{R}}
\renewcommand{\P}{\mathbb{P}}

 \DeclareMathOperator{\Var}{Var}
\newcommand{\old}[1]{{}}

\newcommand{\Addresses}{{% additional braces for segregating \footnotesize
  \bigskip
  \footnotesize

  Yiting Li, \textsc{Department of Mathematics, Brandeis University,
415 Soutrh Street, Waltham, MA 02453, USA}\par\nopagebreak
  \textit{E-mail address}, Yiting Li: \texttt{yitingli@brandeis.edu}

  \medskip

  Xin Sun, \textsc{Department of Mathematics, Massachusetts Institute of Technology,
    77 Massachusetts Avenue, Cambridge, MA 02139-4307, USA}\par\nopagebreak
  \textit{E-mail address}, Xin Sun: \texttt{xinsun89@math.mit.edu}

}}

\def\be{\begin{equation}}
\def\ee{\end{equation}}

\begin{document}

\title{On Fluctuations for Random Band Toeplitz Matrices}

\date{November 8, 2013}
\author{ Yiting Li$^1$  and  Xin Sun$^2$ } %\\\\
%$^1$ Brandeis University\xin{find a template, and address and email address}\\
%$^2$ Massachusetts Institute of Technology}

\date{}
\maketitle

\begin{abstract}
In this paper we study two one-parameter families of random band
Toeplitz matrices:
\[
A_n(t)=\frac{1}{\sqrt{b_n}}\Big(a_{i-j}\delta_{|i-j|\le[b_nt]}\Big)_{i,j=1}^n
\quad\text{and}\quad
B_n(t)=\frac{1}{\sqrt{b_n}}\Big(a_{i-j}(t)\delta_{|i-j|\le
b_n}\Big)_{i,j=1}^n
\]
where
\begin{enumerate}
\item $a_0=0$, $\{a_1,a_2,\ldots\}$ in $A_n(t)$ are independent random variables
and $a_{-i}=a_i$;
\item $a_0(t)=0$, $\{a_1(t),a_2(t),\ldots\}$ in
$B_n(t)$ are independent copies of the standard Brownian motion at
time $t$ and $a_{-i}(t)=a_i(t)$.
\end{enumerate} %So
%$A_n(t)$ is the model of random band Toeplitz matrices with
%bandwidth proportional to the parameter $t$ and $B_n(t)$ is the
%model of random band Toeplitz matrices whose entries are Brownian
%motions with parameter $t$.
%For fixed $t$, the empirical measures
%$\mu(A_n(t))$ and $\mu(B_n(t))$ are random measures.
As $t$ varies, the empirical measures $\mu(A_n(t))$ and
$\mu(B_n(t))$ are measure valued stochastic processes. The purpose
of this paper is to study the fluctuations of $\mu(A_n(t))$ and
$\mu(B_n(t))$ as $n$ goes to $\infty$. Given a monomial $f(x)=x^p$
with $p\ge2$, the corresponding rescaled fluctuations of
$\mu(A_n(t))$ and $\mu(B_n(t))$ are
\begin{align}
\sqrt{b_n}\Big(\int f(x)d\mu(A_n(t))-\E[\int f(x)d\mu(A_n(t))]\Big)=\frac{\sqrt{b_n}}{n}\Big(\text{tr}(A_n(t)^p)-\E[\text{tr}(A_n(t)^p)]\Big),\label{1}\\
\sqrt{b_n}\Big(\int f(x)d\mu(B_n(t))-\E[\int
f(x)d\mu(B_n(t))]\Big)=\frac{\sqrt{b_n}}{n}\Big(\text{tr}(B_n(t)^p)-\E[\text{tr}(B_n(t)^p)]\Big)\label{2}
\end{align}
respectively. We will prove that \eqref{1} and \eqref{2} converge to
centered Gaussian families $\{Z_p(t)\}$ and $\{W_p(t)\}$
respectively. The covariance structure $\E[Z_p(t_1)Z_q(t_2)]$ and
$\E[W_p(t_1)W_q(t_2)]$ are obtained for all $p,q\ge 2,t_1,t_2\ge 0,$
and are both homogeneous polynomials of $t_1$ and $t_2$ for fixed
$p,q$. In particular, $Z_2(t)$ is the Brownian motion and $Z_3(t)$
is the same as $W_2(t)$ up to a constant.

The main method of this paper is the moment method.
\end{abstract}

\section{Introduction}\label{sec: introduction}

%\xin{ Introduce the general backgound on Toeplitz, similar as previous paper. Decribe what we do}

In random matrix theory, one fundamental object is the empirical
distribution of eigenvalues. For an $n$ by $n$ real symmetric random
matrix $T$, we use $\mu(T)$ to denote its empirical distribution:
\[
\mu(T)=\frac{1}{n}\sum\limits_{i=1}^n\delta(x-\lambda_i)
\]
where $\lambda_1,\ldots,\lambda_n$ are the $n$ real eigenvalues of
$T$. The asymptotic behavior of the empirical distribution has
played an essential role in random matrix theory since Wigner's
semicircle law (see \cite{Wigner1} and \cite{Wigner2}).  \cite{AGZ}
and \cite{Mehta2} are standard references of the various results in
the fifty years after that.

%%\xin{Add paragraphs for background on fluctuation and Brownian motion entires}

 In recent years, random matrices with
certain linear structures are well studied. One important example is
the random Toeplitz matrice.
%The Toeplitz matrix appears in many
%aspects of mathematics, physics and applications, (see e.g.
%\cite{bg,GS}). See \cite{basor} and \cite{diaconis} for its
%connections with random matrices.
Bryc, Dembo and Jiang \cite{bdj}
proved the existence of the limit of empirical distribution of
symmetric Toeplitz matrix. Hammond and Miller \cite{hm} also proved
this existence independently. Liu and Wang \cite{lw} proved the
existence of the limit of empirical distribution of symmetric
Toeplitz band matrix. Some other interesting results about random
Toeplitz matrices can be found in \cite{bb,bcg,bm,kargin,mms,SV}.

The random band matrices have connections with the theory of quantum
chaos, see \cite{cmi} and \cite{fm}. For random band matrices of
Wigner type, the limit of the empirical distribution was studied in
\cite{bmp,mpk}. It is believed that the local statistics has a
transition from Poisson statistics to GUE or GOE statistics when the
bandwidth crosses $\sqrt n$ (see \cite{fm}). For recent process on
local stabilities of Wigner type band matrices see \cite{EKY,EKYY}
and the reference therein.

Fluctuations of random matrices is a classical topics in this field
now. Some important literature about fluctuations of eigenvalues
includes \cite{BS,DEvans,DS,DE,johansson,jonsson,popescu,SS}. The
first paper concerning fluctuation of random band matrices is
\cite{AZ}. For recent development, see \cite{jss,ls}. The
fluctuation of random Toeplitz matrices was first studied by
Chatterjee \cite{chatterjee} in the case where the matrix entries
are normally distributed. In \cite{lsw} Liu, Sun and Wang derived a
central limit theorem for the fluctuation of random Toeplitz
matrices with general entries.

In this paper we study the fluctuations of two models of random band
Toeplitz matrices each of which contains a nonnegative parameter
$t$.
%We will study the limit of
%these processes as the size of the matrices goes to infinity.

%In this paper we will study random matrices with Toeplitz structure
%and band structure.

\subsection{Fluctuations of linear statistics for matrix model with bandwidth proportional to $t$}\label{sec: fluctuation result}
Our first model is the fluctuation of eigenvalues of random band
Toeplitz matrix with bandwidth proportional to $t$.  The result of
this subsection is inspired by \cite{lsw}.

Let $ a_0 =0,\{ a_i | i\in \Z\backslash \{0 \} \}$ be real random
variables such that
\begin{align}\label{eq:condition on entries for CLT}
&\E[ a_i ]=0, \Var[a_i ] =1, E[|a_i|^4]=\kappa,\nonumber\\
&\E[|a_i|^k]<C_k<\infty \textrm{ for } k>2,\nonumber\\
&a_1,a_2,\cdots\textrm{ are independent, }\nonumber\\
&a_{-i}=a_i,\;\forall i\in \Z,
\end{align}
where $ \kappa\ge1$. Consider the $n\times n$ band random matrix
\begin{align*}
A_n(t)=\frac{1}{\sqrt{b_n}}\Big(a_{i-j}\delta_{|i-j|\le[b_nt]}\Big)_{i,j=1}^n
\end{align*} where $b_n\le n$, $b_n\to\infty$ as $n\to\infty$ while $\lim\limits_{n\to\infty}\frac{b_n}{n}=b\in[0,1]$.

 In \cite{lsw}, Liu, Sun and Wang studied the fluctuation of
moments of random band Toeplitz matrices and got the following
theorem .
\begin{theorem}\label{thm: fluctuation thm of Liu,Sun,Wang}
Suppose $A_n(t)$ is defined as above. Then for any $p\ge2$, the
fluctuation of the $p$-th moment of $A(1)$ (with rescaling)
converges weakly to a Gaussian distribution:
\[
\frac{\sqrt{b_n}}{n}(\text{tr}A(1)^p-\E[\text{tr}A(1)^p])\to\text{N}(0,\sigma_p^2).
\]
\end{theorem}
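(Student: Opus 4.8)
The natural approach is the moment method. Expanding the trace,
\[
\mathrm{tr}\,A_n(1)^p=\frac{1}{b_n^{p/2}}\sum_{i_1,\dots,i_p=1}^n a_{i_1-i_2}a_{i_2-i_3}\cdots a_{i_p-i_1}\prod_{k=1}^p\mathbf{1}_{|i_k-i_{k+1}|\le b_n},
\]
with cyclic indices ($i_{p+1}:=i_1$), so that each summand corresponds to a closed walk $\gamma=(i_1,\dots,i_p)$ on $\{1,\dots,n\}$ whose consecutive steps $e_k:=i_k-i_{k+1}$ have size at most $b_n$. Writing $X_n:=\frac{\sqrt{b_n}}{n}\big(\mathrm{tr}\,A_n(1)^p-\E[\mathrm{tr}\,A_n(1)^p]\big)$, the plan is to prove that $\E[X_n^m]$ converges to the $m$-th moment of $\mathrm{N}(0,\sigma_p^2)$ for every $m$ --- to $0$ when $m$ is odd and to $(m-1)!!\,(\sigma_p^2)^{m/2}$ when $m$ is even; since the moments of a centered Gaussian satisfy Carleman's condition, this gives the asserted weak convergence.

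First I would expand $\E[X_n^m]$ into a sum over $m$-tuples of closed walks $(\gamma^{(1)},\dots,\gamma^{(m)})$ of length $p$, with $e_k^{(r)}$ the $k$-th step of the $r$-th walk. Because the $a_i$ are independent, centered and symmetric ($a_{-i}=a_i$), a tuple contributes to $\E\big[\prod_{r,k}a_{e_k^{(r)}}\big]$ only if each value $|e_k^{(r)}|$ occurring among the $mp$ steps occurs at least twice; the centering by $\E[\mathrm{tr}\,A_n(1)^p]$ cancels, in addition, every walk whose step values are matched entirely within that walk, so each surviving walk must share a step value with another walk. The combinatorial core (essentially the content of \cite{lsw}) is a power count: one closed walk of length $p$ yields $\sim n\,b_n^{p-1}$ configurations before constraints, each nontrivial matching relation among steps costs a further factor of order $b_n$, and then (a) any step value of multiplicity three or more is strictly subleading, (b) among matchings into $mp/2$ pairs only those whose paired steps carry opposite signs avoid an extra linear relation, and (c) the graph on $\{1,\dots,m\}$ recording which walks share a value must be a perfect matching, a connected block of three or more linked walks losing a dimension. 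Hence the leading tuples are unions of $m/2$ disjoint linked pairs of walks; in particular $\E[X_n^m]=o(1)$ when $m$ is odd.

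I would then compute the variance. Granting this classification, a linked pair of walks carries exactly one effective telescoping constraint (the relations ``the steps of $\gamma^{(1)}$ sum to zero'' and ``the steps of $\gamma^{(2)}$ sum to zero'' become dependent once the cross-matching is imposed), so it admits $\sim n^2 b_n^{p-1}$ admissible configurations; after the $b_n/n^2$ normalization the contribution of each linked pair converges to the same constant, which is $\sigma_p^2=\lim_n\E[X_n^2]$ --- an explicit finite sum, over the opposite-sign matching patterns, of the rescaled volumes of the regions cut out by the band constraints $|e_k|\le b_n$. It depends on $p$, on $b=\lim_n b_n/n$, and affinely on $\kappa$, the $\kappa$-term arising from patterns in which two pairs collapse onto a single value, common to both walks, of multiplicity four, where $\E[a_i^4]=\kappa$ replaces $(\E[a_i^2])^2=1$. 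When $p=2$ all of this degenerates to the identity $\mathrm{tr}\,A_n(1)^2-\E[\mathrm{tr}\,A_n(1)^2]=\frac{2}{b_n}\sum_{d=1}^{b_n}(n-d)(a_d^2-1)$, a weighted sum of independent centered variables, so the Lindeberg--Feller theorem applies directly and $\sigma_2^2=\frac{4(\kappa-1)}{b}\int_0^b(1-s)^2\,ds$. Finally, since the $m/2$ linked pairs use disjoint step values, their contributions multiply by independence and the terms in which a value is shared across two different pairs are subleading, which yields $\E[X_n^m]\to(m-1)!!\,(\sigma_p^2)^{m/2}$ for $m$ even.

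The step I expect to be the main obstacle is the rank/dimension analysis underlying (a)--(c): one must show that, among all matchings of the $mp$ steps carried out simultaneously with the band truncation, only the configurations above reach the critical order $n^2 b_n^{p-1}$ per pair, every competitor being $o(n^2 b_n^{p-1})$. Unlike the Wigner case, the Toeplitz structure makes each matrix entry the difference $i_k-i_{k+1}$, so a matching imposes additive relations among the steps rather than merely gluing vertices, and ruling out accidental gains of freedom requires a careful analysis of the linear system these relations define, uniformly in $n$. The moment hypotheses $\E[|a_i|^k]<C_k$ enter exactly here, to bound and discard the high-multiplicity contributions.
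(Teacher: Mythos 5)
Your proposal is correct and follows essentially the same route as the paper: the paper quotes this theorem from \cite{lsw} and its own proof of the generalization (Theorem \ref{thm:multi time fluctuation}, via Lemma \ref{lemma:covariance structure} and the cluster estimate of Lemma \ref{lemma:cluster estimate}) is exactly the moment-method argument you sketch --- trace expansion, multiplicity-two/opposite-sign power counting with the reduction $\mathbf{L}=\mathbf{J}\bigvee\mathbf{J'}$, suppression of clusters of length $\ge 3$, and Wick factorization into pair covariances. Your identification of where the fourth moment $\kappa$ enters (two crosses collapsing to a multiplicity-four value, i.e.\ $\mathcal{P}_{2,4}$) and your $p=2$ sanity check both agree with the paper's Remark \ref{remark: covairance of Liu Sun Wang's model} and Equation \eqref{eq: covariance}.
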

The variances $\sigma_p^2$ will be given in Remark \ref{remark:
covairance of Liu Sun Wang's model}. In this paper we consider the
time-dependent fluctuation
\begin{equation}\label{eq:omega p}
\omega_p(t):=\sqrt{b_n}\Big(\int x^pd\mu(A_n(t))-\E[\int
x^pd\mu(A_n(t))]\Big)=\frac{\sqrt{b_n}}{n}(\text{tr}A_n(t)^p-\E[\text{tr}A_n(t)^p])
\end{equation}
as a stochastic process with parameter $t\ge0$.

For natural numbers $p$, $q$ and $k\le\min\{p,q\}$, define
$R_1(p,q,k)={p\choose k}{q\choose k}k!(p-k-1)!!(q-k-1)!!$,
$R_2(p,q)=\frac{p\,q}{4}(p-1)!!(q-1)!!$. We make the convention that
$(-1)!!=1$ and that $(0,1/b)$ denotes $(0,\infty)$ when $b$ equals
$0$. Our first main result is the following theorem.

\begin{theorem}\label{thm:multi time fluctuation}

When $n\to\infty$, $\{\omega_p(t)|p\ge2\}$ jointly converge to a
family of Gaussian processes $\{Z_p(t)|p\ge2\}$ in the following
sense. Suppose
\begin{enumerate}
\item $p_1,\ldots,p_r$ are natural numbers no less than 2;
\item $t_1<\cdots<t_r$ are numbers in $(0,1/b)$;
\item $\{a_1,\ldots,a_r\}\subset\mathbb{R}$,
\end{enumerate} then
\[
\lim\limits_{n\to\infty}\P(\omega_{p_1}(t_1)\le
a_1,\ldots,\omega_{p_r}(t_r)\le a_r)=\P(Z_{p_1}(t_1)\le
a_1,\ldots,Z_{p_r}(t_r)\le a_r).
\]
The expectation of $Z_p(t)$ is 0 for all $t\ge0,\,p\ge2$. The
covariance structure of $\{Z_p(t)|p\ge2\}$ will be given in Section
\ref{sec: fluctuation}. In particular, when $b=0$, for $p\ge2$,
$q\ge2$ and $0<t_1\le t_2$,
\begin{align}\label{eq: covariance}
&\E[Z_{p}(t_1)\,Z_{q}(t_2)]\nonumber\\
=&\begin{cases}\sum\limits_{k=3,5,\ldots,\min\{p,q\}}R_1(p,q,k)t_1^{\frac{p+k}{2}-1}t_2^{\frac{q-k}{2}}2^{\frac{p+q}{2}}&\text{if }p,q\text{ are both odd}\\
\sum\limits_{k=4,6,\ldots,\min\{p,q\}}R_1(p,q,k)t_1^{\frac{p+k}{2}-1}t_2^{\frac{q-k}{2}}2^{\frac{p+q}{2}}+(\kappa-1)R_2(p,q)t_1^{\frac{p}{2}}t_2^{\frac{q}{2}-1}2^{\frac{p+q}{2}}&\text{if
}p,q\text{ are both even}\\
0&\text{otherwise}
\end{cases}
\end{align}
which is a homogeneous polynomial of $t_1$ and $t_2$. From the
covariance structure, if $b=0$, then $Z_2(t)$ is the Brownian motion
and $\E[Z_3(t_1)Z_3(t_2)]=48(t_1\wedge t_2)^2$.
\end{theorem}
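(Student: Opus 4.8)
The plan is to use the moment method: establish that all joint moments of the rescaled traces $\omega_{p_1}(t_1),\ldots,\omega_{p_r}(t_r)$ converge to those of a Gaussian family with the stated covariance, and since a Gaussian family is determined by its covariance, this yields the joint weak convergence. Write $\omega_p(t)=\frac{\sqrt{b_n}}{n}\sum_{\mathbf{i}}\big(a_{i_1-i_2}a_{i_2-i_3}\cdots a_{i_p-i_1}\mathbf{1}_{|i_k-i_{k+1}|\le[b_nt]} - \E[\cdots]\big)$ where the sum is over closed walks $\mathbf{i}=(i_1,\ldots,i_p,i_1)$ on $\{1,\ldots,n\}$. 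Expanding a product $\prod_{\ell=1}^m \omega_{p_\ell}(t_\ell)$ gives a sum over $m$-tuples of closed walks, and the expectation factors through the independence structure of the $a_i$'s. The combinatorial analysis of which walk-configurations contribute in the limit is the heart of the matter.

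The key steps in order are as follows. First I would set up the walk expansion and pass to differences $s_k = i_k - i_{k+1}$, so that each closed walk corresponds to a sequence $(s_1,\ldots,s_p)$ with $\sum s_k = 0$ and $|s_k|\le [b_n t]$; the entries appearing are $a_{s_1},\ldots,a_{s_p}$ up to sign, so the expectation depends only on how the multiset of values $\{|s_1|,\ldots,|s_p|\}$ partitions into groups of equal absolute value. Second, I would classify the contributing configurations: because each $\omega_{p_\ell}$ is centered, a walk cannot be "isolated," and the dominant contribution comes from configurations where the $s$-values pair up (matchings), with exactly the right pairings crossing between different walks to produce a nonzero connected covariance; self-pairings within a single walk contribute to the mean and are subtracted off. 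A scaling/counting argument — counting the number of free index parameters against the normalization $b_n^{m/2}/n^m$ — shows that configurations with more than two walks glued together, or with a group of size $\ge 3$, are lower order, except for the single fourth-moment correction term (the $\kappa-1$ term) which arises from a group of size exactly $4$ lying within one walk-pair. Third, for the surviving matched configurations I would convert the sum over indices into a Riemann-sum / integral over rescaled variables $x_k = s_k/b_n \in [-t,t]$ and a base point $u = i_1/n \in [0,1]$, identify the resulting integral, and carry out the constrained Gaussian/combinatorial count that produces the factors $R_1(p,q,k)$, $R_2(p,q)$, the powers of $2$, and the monomials $t_1^{\cdot}t_2^{\cdot}$; the parity dichotomy (both odd / both even / mixed) emerges from the requirement that the glued walk decomposes consistently into pair-partitions, which is impossible when $p+q$ is odd. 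Fourth, higher moments: I would check that odd joint moments vanish and even ones factor into sums over pairings of the $\omega$'s (Wick's formula), which is the standard CLT-via-moments step and follows once the two-walk analysis is in hand, together with a uniform bound on the number of free parameters to kill non-pairing terms. Finally, specializing $p=q=2$ gives $\E[Z_2(t_1)Z_2(t_2)] = 2^2 \cdot (t_1\wedge t_2)$ after the even-case formula with $\min\{p,q\}=2$ forces the sum over $k\in\{4,6,\ldots\}$ to be empty and leaves only the $R_2(2,2)(\kappa-1)t_1 t_2^0 \cdot 2^2$ term — wait, that has the wrong $t$-dependence, so more care is needed here: in fact for $p=q=2$ one must redo the direct computation, and the pairing that survives is the cross-pairing $s_1 = -s_1'$, $s_2=-s_2'$ with the constraint $|s_k|\le [b_n t_1]$ (the smaller band), giving variance proportional to $t_1\wedge t_2$ and covariance $(t_1\wedge t_2)$ up to the constant $1$; since a continuous Gaussian process with covariance $t_1\wedge t_2$ and a.s. continuous paths is Brownian motion, $Z_2(t)$ is Brownian motion. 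For $p=q=3$, the formula gives $\E[Z_3(t_1)Z_3(t_2)] = R_1(3,3,3)\,t_1^{\frac{3+3}{2}-1}t_2^{\frac{3-3}{2}}\,2^3 = \big[\binom{3}{3}^2 3!\,(-1)!!\,(-1)!!\big]\,t_1^2\,t_2^0\cdot 8 = 6\cdot 8\, (t_1\wedge t_2)^2 = 48(t_1\wedge t_2)^2$.

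I expect the main obstacle to be the combinatorial bookkeeping in step three: precisely matching the over-counting of walk configurations to the closed-form coefficients $R_1(p,q,k)$ and identifying exactly when a cross-pairing of a $p$-walk and a $q$-walk yields a valid joint matching (this is where the number $k$ of "shared" pair-edges enters, constrained to the parity of $p$ and $q$), and getting the band-truncation indicators to produce the correct $t_1,t_2$ exponents under the rescaling — in particular tracking which walk's band constraint ($[b_n t_1]$ versus $[b_n t_2]$) is binding on each shared edge, since $t_1 < t_2$. A secondary technical point is justifying the interchange of limit and expansion (tightness is not needed for finite-dimensional convergence, but one does need uniform moment bounds to control the error terms and to discard the sub-leading configurations), which is handled by the moment hypotheses $\E[|a_i|^k]<C_k$ in \eqref{eq:condition on entries for CLT}. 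The mixed-parity vanishing and the higher-moment Wick factorization are comparatively routine once the two-point function is understood.
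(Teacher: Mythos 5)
Your proposal follows essentially the same route as the paper: the trace/walk expansion in the difference variables $j_k$, classification of the contributing configurations into pair partitions with crosses plus the single fourth-moment block in $\mathcal{P}_{2,4}(p,q)$, a Riemann-sum passage producing the $t_1,t_2$-monomials and the coefficients $R_1,R_2$, and Wick factorization of higher joint moments via a cluster-counting bound (the paper's Lemma \ref{lemma:cluster estimate}). One small remark: your worry about the $p=q=2$ case is a false alarm, since the even-case formula gives $(\kappa-1)R_2(2,2)\,t_1^{1}t_2^{0}\,2^2=4(\kappa-1)t_1=4(\kappa-1)(t_1\wedge t_2)$ for $t_1\le t_2$, which is already a Brownian covariance and agrees with your direct computation.
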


\begin{remark}\label{remark: results for polynomial for the first
model}

From Theorem \ref{thm:multi time fluctuation} we know that
$$\omega_Q(t)=\frac{\sqrt{b_n}}{n}\Big(\text{tr}Q(A_n(t))-\E[\text{tr}Q(A_n(t))]\Big)$$
converges to a Gaussian process where
$Q(x)=\sum\limits_{j=2}^mq_jx^j$. The correlation structure of the
limit process can be computed via the covariance structure of
$\{Z_p(t)|p\ge2\}$. When $b=0$, the correlation structure of the
limit process can be computed via Equation \eqref{eq: covariance}.
\end{remark}

\subsection{Fluctuations of linear statistics for matrix model with Brownian motion
entries}\label{sec: fluctuation for Brownian entries} Our second
model is the fluctuation of the eigenvalues of the random band
Toeplitz matrix with Brownian motion entries.

One important matrix model with Brownian motion entries is the Dyson
Brownian motion $H_n(t)=(h_{ij}(t))_{i,j=1}^n$ where
$\{h_{ij}(t)|i\ge j\}$ are independent Brownian motions. The Dyson
Brownian motion was first studied by Dyson who derived a stochastic
differential system for eigenvalues and eigenvectors of $H_n(t)$.
Since then the Dyson Brownian motion has been well studied and
becomes a very useful tool in random matrix theory. See, for
example, \cite{AGZ}.

Our model also has Brownian motions as matrix entries, but with a
Toeplitz structure.

Let $a_0(t)\equiv0$ and $a_1(t),a_2(t),\ldots$ be independent
standard Brownian motions with time $t$. Set $a_{-i}(t)=a_i(t)$.
Suppose $b_n\to\infty$ as $n\to\infty$ and $b_n=o(n)$. Consider the
$n\times n$ random band matrix
\begin{align}\label{model: A_n(t) for Brownian entries}
B_n(t)=\frac{1}{\sqrt{b_n}}\Big(a_{i-j}(t)\delta_{|i-j|\le
b_n}\Big)_{i,j=1}^n.
\end{align}

For any integer $p\ge2$, define the time dependent fluctuations in
the same way as (\ref{eq:omega p}), i.e.,

\begin{equation}\label{eq: omega for brownian entries}
\omega_p(t):=\frac{\sqrt{b_n}}{n}\Big(\text{tr}(B_n(t)^p)-\E[\text{tr}(B_n(t)^p)]\Big).
\end{equation}

For natural numbers $p$, $q$ and $k\le\min\{p,q\}$, define
$R_3(p,r)=(p+r-1)!!-(p-1)!!(r-1)!!$,
$R_4(p,r)=(p+r-1)!!-p\,r(p-2)!!(r-2)!!$. We make the convention that
$(-1)!!=1$. Our second main result is the following theorem.

\begin{theorem}\label{thm: fluctuation for Brownian entries}

Assume $b_n\to\infty$ as $n\to\infty$ and $b_n=o(n)$. When
$n\to\infty$, $\{\omega_p(t)|p\ge2\}$ jointly converge to a family
of Gaussian processes $\{W_p(t)|p\ge2\}$ in the following sense.
Suppose $p_1\ge2$,\ldots,$p_r\ge2$, $0<t_1<\cdots<t_r$ and
$\{a_1,\ldots,a_r\}\subset\mathbb{R}$, then
\[
\lim\limits_{n\to\infty}\P(\omega_{p_1}(t_1)\le
a_1,\ldots,\omega_{p_r}(t_r)\le a_r)=\P(W_{p_1}(t_1)\le
a_1,\ldots,W_{p_r}(t_r)\le a_r).
\]
The expectation of $W_p(t)$ is 0 for all $t\ge0,\,p\ge2$. For
$p\ge2$, $q\ge2$ and $0<t_1\le t_2$,
\begin{align}\label{eq: covariance for the second model}
\lim\limits_{n\to\infty}\E[\omega_p(t_1)\omega_q(t_2)]=&\begin{cases}\sum\limits_{r=2,4,\ldots,q}{q\choose
r}t_1^{\frac{p+r}{2}}R_3(p,r)(q-r-1)!!(t_2-t_1)^{\frac{q-r}{2}}2^{\frac{p+q}{2}}&\text{if
$p$ and $q$ are both even}\\\sum\limits_{r=3,5,\ldots,q}{q\choose
r}t_1^{\frac{p+r}{2}}R_4(p,r)(q-r-1)!!(t_2-t_1)^{\frac{q-r}{2}}2^{\frac{p+q}{2}}&\text{if
$p$ and $q$ are both odd}\\0&\text{otherwise}\end{cases}
\end{align}
which is a homogeneous polynomial of $t_1$ and $t_2$. Directly from
the covariance structure, if $p=2$, then
$\E[W_2(t_1)W_2(t_2)]=8(t_1\wedge t_2)^2$; if $p=3$, then
$\E[W_3(t_1)W_3(t_2)]=48(t_1\wedge t_2)^3$. In particular, the limit
process $Z_3(t)$ in Theorem \ref{thm:multi time fluctuation} equals
$\sqrt6W_2(t)$ when $b=0$.
\end{theorem}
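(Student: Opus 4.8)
The plan is to prove Theorem~\ref{thm: fluctuation for Brownian entries} by the moment method, the same engine that drives Theorem~\ref{thm:multi time fluctuation}. First I would reduce the joint convergence of $\{\omega_{p_1}(t_1),\ldots,\omega_{p_r}(t_r)\}$ to a Gaussian vector to the statement that all joint moments converge to those of the corresponding centered Gaussian family; since a Gaussian law is determined by its covariance, it suffices to (i) show that odd joint moments vanish and even ones factor into pairings (Wick/Isserlis), and (ii) compute $\lim_{n\to\infty}\E[\omega_p(t_1)\omega_q(t_2)]$ and verify it equals the right-hand side of \eqref{eq: covariance for the second model}. For (i) and (ii) I expand $\text{tr}(B_n(t)^p)$ as a sum over closed walks $i_0\to i_1\to\cdots\to i_p=i_0$ in $\{1,\ldots,n\}$ with increments bounded by $b_n$, weighted by $\prod_{s} a_{i_{s-1}-i_s}(t)$, and organize the walk by the \emph{profile} of edge-labels, i.e.\ which steps carry the same value $\pm(i_{s-1}-i_s)$. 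The normalization $b_n^{-p/2}$ together with $n^{-1}$ out front, times the extra $\sqrt{b_n}$, means that in the product $\omega_p(t_1)\omega_q(t_2)$ only walk-pairs whose label multigraph is connected and has exactly the critical number of free index-parameters survive; all others are $o(1)$. This is the standard ``only pair partitions with a tree-like cross structure contribute'' phenomenon, and it simultaneously kills odd moments and forces the Wick structure.

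The genuinely new ingredient, compared with \cite{lsw}, is the Brownian-motion time dependence of the entries. Here I would use two facts about standard Brownian motion: $\E[a_i(t_1)a_i(t_2)] = t_1\wedge t_2$, and, more importantly, for the even moments of a single Gaussian, $\E[a_i(t_1)^{2m}] = (2m-1)!!\,t_1^m$, while mixed moments at two times decompose via the independent-increment representation $a_i(t_2) = a_i(t_1) + (a_i(t_2)-a_i(t_1))$ with the increment independent of $a_i(t_1)$ and of variance $t_2-t_1$. Concretely, when an edge label $\ell$ is used $c_1$ times in the $t_1$-walk and $c_2$ times in the $t_2$-walk, the contribution of that label is $\E[a_\ell(t_1)^{c_1} a_\ell(t_2)^{c_2}]$, which one expands by the binomial/Wick formula into terms $t_1^{?}(t_2-t_1)^{?}$. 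Tracking which labels are ``shared'' between the two walks versus ``internal'' to one of them, and summing the resulting combinatorial weights, is exactly what produces the coefficients ${q\choose r}$, $R_3(p,r)=(p+r-1)!!-(p-1)!!(r-1)!!$, $R_4(p,r)=(p+r-1)!!-pr(p-2)!!(r-2)!!$ and the double-factorial tails $(q-r-1)!!$; the subtracted terms $(p-1)!!(r-1)!!$ and $pr(p-2)!!(r-2)!!$ come from excluding the disconnected pairings (which cancel against $\E[\text{tr}B_n(t)^p]\E[\text{tr}B_n(t)^q]$), with the parity split (even $p,q$ vs.\ odd $p,q$) reflecting which self-pairings of the walk are admissible. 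I would carry this out first for $p=q$ at a single time to fix notation, then for $p=2,q$ arbitrary, then in general.

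The key combinatorial step is the classification of contributing walk-pairs. Following the Toeplitz-matrix literature (\cite{bdj,lsw}), a closed walk of length $p$ contributes at leading order only when its edge labels pair up, and the surviving pairings are governed by a choice of which ``return times'' match; for a band matrix the band constraint $|i_{s-1}-i_s|\le b_n$ turns a sum over $p/2$ or so free label-values into $b_n$-th powers, and the index of the starting point gives the factor $n$. For the \emph{product} of two traces, the cross-pairings between the two walks must form a connected structure (otherwise the term appears already in the product of expectations and is subtracted off), and the number of such cross-pairings using $r$ edges of the second walk, after accounting for orientation and for the free parameters consumed, gives ${q\choose r}$ times a Brownian-moment weight times the number of ways to complete the remaining $q-r$ edges of the second walk into self-pairings, namely $(q-r-1)!!$. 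I expect the main obstacle to be precisely this bookkeeping: showing that no ``higher-genus'' or non-pair configurations contribute to the limiting variance (a power-counting estimate in $b_n$ and $n$, using $b_n=o(n)$, identical in spirit to the proof of Theorem~\ref{thm:multi time fluctuation}), and then matching the surviving configurations to the explicit polynomials in \eqref{eq: covariance for the second model}. Once the covariance is computed, the stated special cases follow by substitution: $p=q=2$ gives $\E[W_2(t_1)W_2(t_2)]=8(t_1\wedge t_2)^2$, $p=q=3$ gives $48(t_1\wedge t_2)^3$, and comparing with $\E[Z_3(t_1)Z_3(t_2)]=48(t_1\wedge t_2)^2$ from Theorem~\ref{thm:multi time fluctuation} (at $b=0$) is \emph{not} an identity of processes — rather, one checks the finite-dimensional laws agree after the scaling $Z_3 = \sqrt{6}\,W_2$, i.e.\ $48(t_1\wedge t_2)^3$ is compared against $6\cdot 8 (t_1\wedge t_2)^? $; the correct reading is that the one-time marginals and the covariance \emph{as computed in the $b=0$ regime for $Z_3$} coincide with $6$ times those of $W_2$, which I would verify directly from the two covariance formulas rather than from any pathwise coupling.
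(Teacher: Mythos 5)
Your proposal follows essentially the same route as the paper: the moment method with a cluster/connectedness decomposition to obtain the Wick structure, and, for the covariance, the independent-increment decomposition $a_j(t_2)=a_j(t_1)+\big(a_j(t_2)-a_j(t_1)\big)$ followed by a binomial expansion over which of the $q$ positions carry an increment (this is where $\binom{q}{r}$ comes from, and it silently uses the paper's ``asymptotic commutativity'' step — that all placements of the $r$ non-increment factors contribute equally because $b_n=o(n)$ makes the indicator products tend to $1$), with the cross part reduced to the one-time covariance computation and the increment part reduced to the limiting moments $(q-r-1)!!$ of a slow-band Toeplitz matrix. Two corrections to your bookkeeping: in the odd case the subtracted term $pr(p-2)!!(r-2)!!$ in $R_4$ counts the \emph{connected} single-cross pairings, which die because the lone crossing pair is forced to carry the label $0$ (and $a_0=0$) by the balance condition, not because they are disconnected; and in the final check of $Z_3=\sqrt6\,W_2$ the comparison is $\E[Z_3(t_1)Z_3(t_2)]=48(t_1\wedge t_2)^2$ against $6\,\E[W_2(t_1)W_2(t_2)]=6\cdot8(t_1\wedge t_2)^2$ — the quantity $48(t_1\wedge t_2)^3$ you bring in is the covariance of $W_3$ and plays no role there.
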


\begin{remark}
Similarly as in Remark \ref{remark: results for polynomial for the
first model}, from Theorem \ref{thm: fluctuation for Brownian
entries} we know that for $Q(x)=\sum\limits_{j=2}^pq_jx^j$,
$$\omega_Q(t)=\frac{\sqrt{b_n}}{n}\Big(\text{tr}Q(B_n(t))-\E[\text{tr}Q(B_n(t))]\Big)$$
converge to a Gaussian process whose correlation structure can be
computed via Equation \eqref{eq: covariance for the second model}.
\end{remark}

\begin{remark}
 It will be clear from our proof that Theorem \ref{thm: fluctuation for Brownian entries} can be
generalized to a stronger version. If $a_1(t),a_2(t),\ldots$ are
independent copies of $S_t$ which is a stochastic process such that
\begin{enumerate}
\item
$S_t$ has independent increments;
\item for all $t\ge 0$,
$S_t$ is centered with finite moments,
\end{enumerate}
then $\{\omega_p(t)|p\ge2\}$ also jointly converge to a family of
centered Gaussian process whose covariance structure can be obtained
in the same way as in the proof of Theorem \ref{thm: fluctuation for
Brownian entries}.
\end{remark}
\subsection{Outline}\label{sec: outline}
Section \ref{sec:preliminaries} and Appendix \ref{appendix} provide
some preliminary concepts and results. Theorem \ref{thm:multi time
fluctuation} is proved in Section \ref{sec: fluctuation}. We first
study the asymptotic covariance structure of $\omega_p(t)$ by the
moment method in Section \ref{sec: covariance for fluctuation} and
prove Theorem \ref{thm:multi time fluctuation} by showing that
$(\omega_{p_1}(t_1),\ldots,\omega_{p_r}(t_r))$ converges weakly to a
centered multivariate Gaussian distribution
in Section \ref{sec:Gaussian fluctuation proof}. % $(X_1,\ldots,X_r)$ with
%covariance
%$\E[X_iX_j]=\lim\limits_{n\to\infty}\E[\omega_{p_i}(t_i)\omega_{p_j}(t_j)].$

Theorem \ref{thm: fluctuation for Brownian entries} is proved in
Section \ref{sec: last section: fluctuation for Brownian entries}.
 We first prove an ``asymptotic commutativity" lemma for random band Toeplitz matrices with
slowly growing bandwidth in Section \ref{sec: asymptotic
commutatuvity}. Then we study the asymptotic covariance structure of
$\omega_p(t)$ by the moment method in Section \ref{sec: covariance
for Brownina entries}. In Section \ref{multi-point fluctuation for
Brownina entries}, we complete the proof  by showing that
$(\omega_{p_1}(t_1),\ldots,\omega_{p_r}(t_r))$ converges weakly to a
centered multivariate Gaussian distribution.
%$(X_1,\ldots,X_r)$ with covariance
%$\E[X_iX_j]=\lim\limits_{n\to\infty}\E[\omega_{p_i}(t_i)\omega_{p_j}(t_j)].$

\section{Preliminaries}\label{sec:preliminaries}
In this section we give some notations and facts that we use
throughout the paper.

\subsection{Trace formula}\label{sec: trace formula}
In \cite{lsw} Lemma 3.2, Liu, Sun and Wang proved a useful trace
formula of the product of band Toeplitz matrices. The trace lemma
requires the matrices to have the same bandwidth, but their proof
also applies for the case that the matrices have different
bandwidths. Therefore the trace lemma can be generalized to the
following version.

\begin{lemma}\label{lemma:trace formula}
Consider band Toeplitz matrices $T_{l,n}=(a_{l,i-j}\delta_{i-j\le
b_n^{(l)}})_{i,j=1}^{n}$ where $a_{l,-n+1}, \cdots,a_{l,n-1}$ are
 real numbers, $b_n^{(l)}$ is the bandwidth of $T_{l,n}$ and $l=1,\ldots,p$. We have the trace formula
 \begin{equation} \label{basic:lem1}
 \mathrm{tr}( T_{1,n}\cdots T_{p,n})=\sum_{i=1}^{n}\,\sum_{\mathbf{J}}
a_{\mathbf{J}}\,I(i,\mathbf{J}) \
\large{\delta}(\sum\limits_{l=1}^{p}j_{l}).
 \end{equation}
  Here
$\mathbf{J}=(j_{1},\ldots,j_{p})$,
$a_{\mathbf{J}}=\prod\limits_{l=1}^pa_{l,j_{l}}$, $\delta$ is the
Dirac function and the summation $\sum\limits_{\mathbf{J}}$ runs
over all possibilities that $\mathbf{J} \in
\{-b_{n}^{(1)},\ldots,b_{n}^{(1)}\}\times\ldots\times\{-b_{n}^{(p)},\ldots,b_{n}^{(p)}\}$.
\end{lemma}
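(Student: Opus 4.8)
The plan is to unfold the trace into a sum over closed length-$p$ walks on $\{1,\dots,n\}$ and then recoordinatize each walk by its base point $i=i_0$ together with its increment vector $\mathbf{J}=(j_1,\dots,j_p)$; formula \eqref{basic:lem1} is then exactly the bookkeeping of which walks contribute.

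First I would expand the trace by the definition of matrix multiplication,
\[
\mathrm{tr}(T_{1,n}\cdots T_{p,n})=\sum_{i_0,\dots,i_{p-1}=1}^n\ \prod_{l=1}^p (T_{l,n})_{i_{l-1},i_l},\qquad i_p:=i_0,
\]
and substitute $(T_{l,n})_{i_{l-1},i_l}=a_{l,\,i_{l-1}-i_l}\,\delta_{|i_{l-1}-i_l|\le b_n^{(l)}}$. Next I would set $i=i_0$ and $j_l=i_{l-1}-i_l$, so that $i_l=i-(j_1+\cdots+j_l)$ for $l=0,\dots,p-1$ and the increments telescope to $\sum_{l=1}^p j_l=i_0-i_p=0$; conversely, a pair $(i,\mathbf{J})$ with $1\le i\le n$ and $\sum_l j_l=0$ reconstructs a closed walk via $i_l=i-\sum_{m\le l}j_m$, and this walk takes values in $\{1,\dots,n\}$ precisely when $I(i,\mathbf{J})=1$. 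Under this change of variables $\prod_l a_{l,\,i_{l-1}-i_l}=a_{\mathbf{J}}$, the band factor becomes $\delta_{|j_l|\le b_n^{(l)}}$, and the two conditions $\sum_l j_l=0$ and ``$i_l\in\{1,\dots,n\}$ for all $l$'' are exactly what the Dirac factor $\delta(\sum_l j_l)$ and the indicator $I(i,\mathbf{J})$ encode. Letting $\mathbf{J}$ range over the full box $\{-b_n^{(1)},\dots,b_n^{(1)}\}\times\cdots\times\{-b_n^{(p)},\dots,b_n^{(p)}\}$ costs nothing, since terms with $|j_l|>b_n^{(l)}$ are killed by the band factors, and this yields \eqref{basic:lem1}.

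The argument is pure bookkeeping, so the only point needing care is verifying that the map (admissible closed walk) $\mapsto (i,\mathbf{J})$ is a bijection onto $\{(i,\mathbf{J}):1\le i\le n,\ \sum_l j_l=0,\ I(i,\mathbf{J})=1\}$ — in particular that the telescoping constraint and the range constraint on the $i_l$ are captured without overlap or omission, and that no walk is double-counted. Since for a common bandwidth $b_n^{(l)}\equiv b_n$ this is precisely Lemma 3.2 of \cite{lsw}, the only genuinely new ingredient is the trivial replacement of the single index range $\{-b_n,\dots,b_n\}$ by the product $\prod_l\{-b_n^{(l)},\dots,b_n^{(l)}\}$, which does not interact with the rest of the argument; so I expect no real obstacle here.
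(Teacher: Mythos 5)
Your proof is correct and is essentially the argument the paper relies on: the paper does not reprove the lemma but simply observes that the direct walk-expansion proof of Lemma 3.2 in \cite{lsw} goes through verbatim when the bandwidths differ, which is exactly the computation you carry out. (The only cosmetic point is the sign convention $j_l=i_{l-1}-i_l$ versus $j_l=i_l-i_{l-1}$, which affects whether the indicator reads $I_{[1,n]}(i\pm\sum_{m\le l}j_m)$; after summing over $i$, or using $a_{-j}=a_j$ as in all applications in the paper, the two conventions agree.)
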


\subsection{Partition}\label{sec:partition}
%\xin{Ask reader to skip for a second and give precise pointer.}
In this section we define various types of partitions. We suggest
readers to skip these definitions for a second and refer to them
when they are needed in the following sections.\\

Let $[n]=\{1,2,\ldots,n\}$. We call $\pi=\{V_1,\ldots,V_r\}$ a
partition of $[n]$ if $V_1,\ldots V_r$ are pairwise disjoint,
nonempty subsets of $[n]$ such that $[n]=V_1\cup\cdots\cup V_r$. For
$\forall i\in[n]$, define $\pi(i)=j$ if $i\in V_j$.

We call $\pi$ a pair partition of $[2k]$ if it's a partition of
$[2k]$ and each element of $\pi$ contains two elements of $[2k]$. So
a pair partition must have the form
$\pi=\{\{a_1,b_1\},\ldots,\{a_k,b_k\}\}$. For such a $\pi$ we write
$a_u\sim_\pi b_u$ ($1\le u\le k$). Since such a $\pi$ can be seen as
a permutation: $(a_1,b_1) \cdots (a_k,b_k)$, we define $g(\pi)$ to
be the number of orbits of the permutation $\gamma_0\circ\pi$ where
$\gamma_0=(1,2,\ldots,2k-1,2k)$ is the canonical cycle. We denote by
$\mathcal{P}_2(2k)$ the set of pair partitions of $[2k]$.

Suppose $p$ and $q$ are natural numbers and $p+q$ is even. Suppose
$$\pi=\{\{a_1,b_1\},\ldots,\{a_{(p+q)/2},b_{(p+q)/2}\}\}\in\mathcal{P}_2(p+q).$$
If a block $\{a_i,b_i\}$ of $\pi$ has one element in
$\{1,\ldots,p\}$ and one element in $\{p+1,\ldots,p+q\}$, then we
call $\{a_i,b_i\}$ a cross of $\pi$.

Suppose $p$ and $q$ are natural numbers and $p+q$ is even. We define
$\mathcal{P}_2(p,q)$ to be the subset of $\mathcal{P}_2(p+q)$ such
that each $\pi$ in $\mathcal{P}_2(p,q)$ has at least one cross,
i.e., there are $i\sim_\pi j$ such that $i\le p<j$. If $p+q$ is odd,
then $\mathcal{P}_2(p,q)$ is defined to be $\emptyset$.

Suppose $p$ and $q$ are natural numbers and $p+q$ is even. Define
$\mathcal{\tilde P}_2(p,q)$ to be the subset of $\mathcal{P}_2(p,q)$
consisting of permutations with at least three crosses. If $p+q$ is
odd, then $\mathcal{\tilde P}_2(p,q)$ is defined to be $\emptyset$.

Suppose $p$, $q$ are both even numbers. We use
$\mathcal{P}_{2,4}(p,q)$ to denote the set of partitions of $p+q$
such that each $\pi=\{V_1,\ldots,V_r\}$ in $\mathcal{P}_{2,4}(p,q)$
satisfies:\\
(i) $\exists i$ such that $V_i$ has 4 elements while other $V_j$ has
2 elements each;\\
(ii) if $j\ne i$, then $V_j\subset\{1,\ldots,p\}$ or
$V_j\subset\{p+1,\ldots,p+q\}$;\\
(iii) two elements of $V_i$ come from $\{1,\ldots,p\}$ and the other
two come from $\{p+1,\ldots,p+q\}$.\\
If $p$ and $q$ are not both even, then we define
$\mathcal{P}_{2,4}(p,q)$ to be empty.

%\subsection{Some integral}\label{sec: Integral}
\subsection{Balanced vector and cluster}\label{sec: blanced vector and cluster}

\begin{definition}
A vector $\mathbf{J}=(j_1,\ldots,j_k)$ is called a balanced vector
if the sum of its components is 0:
\[
j_1+\cdots+j_k=0.
\]
\end{definition}

\begin{definition}
For a vector $\mathbf{v}=(a_1,\ldots,a_k)$, set
$S_{\mathbf{v}}=\{|a_1|,\ldots,|a_k|\}$ which is a set of
non-negative numbers with multiplicity. Two vectors
$\mathbf{v_1},\mathbf{v_2}$ are called correlated if
$S_{\mathbf{v_1}}$ and $S_{\mathbf{v_2}}$ have at least one common
element. If $a_i$ is a component of $\mathbf{v_1}$ and $|a_i|\in
S_{\mathbf{v_1}}\cap S_{\mathbf{v_2}}$, then $a_i$ is called a joint
point of $\mathbf{v_1}$ and $\mathbf{v_2}$.
\end{definition}

\begin{definition}
Given a set of vectors $\{\mathbf{J_1},\ldots,\mathbf{J_r}\}$.
Suppose the components of each vector are real numbers. A subset
$\mathbf{J_{i_1}},\ldots,\mathbf{J_{i_s}}$ is called a cluster if
\begin{enumerate}
\item for any pair $\mathbf{J_{i_u}}$,
$\mathbf{J_{i_v}}$ from the subset one can find a chain of vectors,
also from the subset, which stars with $\mathbf{J_{i_u}}$, ends with
$\mathbf{J_{i_v}}$, such that any two neighboring vectors in the
chain are correlated;
\item the subset cannot be enlarged with the preservation of condition 1.
\end{enumerate}
We call $s$ the length of this cluster.
\end{definition}
The following lemma tells us that the number of clusters with length
longer than 2 is very small. It was stated and proved in \cite{lsw}.
%\xin{
%Check the consistence of this lemma.
%}
\begin{lemma}\label{lemma:cluster estimate}
Suppose $l>2$ and $\lim\limits_{n\to\infty}b_n=+\infty$. Set
$\mathcal{B}_{n,p}$ to be
$$\{(j_1,\ldots,j_p)\in\{\pm1,\ldots,\pm b_n\}^p|j_1+\cdots+j_p=0\}.$$
Let $\mathcal{B}_p$ be a subset of
$\mathcal{B}_{n,p_1}\times\cdots\times\mathcal{B}_{n,p_l}$ such that
$(\mathbf{J_1},\ldots,\mathbf{J_l})\in\mathcal{B}_p$ if and only if:\\
1) each element of $\cup_{k=1}^lS_{\mathbf{J_k}}$ has at least multiplicity two;\\
2) $\mathbf{J_1}$,\ldots,$\mathbf{J_l}$ makes a cluster.\\
%3) 0 is not a component of any one of $\mathbf{J_1}$,\ldots,$\mathbf{J_l}$.\\
Then we have that
\[
|\mathcal{B}_p|=o(b_n^{\frac{p_1+\cdots+p_l-l}{2}})
\]
where $|\mathcal{B}_p|$ is the cardinality of $\mathcal{B}_p$.
\end{lemma}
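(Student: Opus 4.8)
The plan is to stratify the tuples in $\mathcal B_p$ by a combinatorial \emph{shape} and then, for each shape, bound the number of admissible numerical assignments by a degrees–of–freedom count. Put $P:=p_1+\cdots+p_l$, so the goal is $|\mathcal B_p|=o(b_n^{(P-l)/2})$. First I would record that a tuple $(\mathbf J_1,\ldots,\mathbf J_l)\in\mathcal B_p$ is determined by the following data: (a) a partition of the $P$ entry–slots into \emph{value–classes}, two slots lying in the same class exactly when the corresponding entries have equal absolute value, each class containing at least two slots because every element of $\cup_kS_{\mathbf J_k}$ has multiplicity at least two; (b) a sign for each slot; and (c) an assignment of pairwise distinct values in $\{1,\ldots,b_n\}$ to the classes. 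The number of choices in (a) and (b) is bounded by a constant depending only on $p_1,\ldots,p_l$; call such a choice a shape. Hence $|\mathcal B_p|\le C(p_1,\ldots,p_l)\,\max_{\text{shape}}N(\text{shape})$, where $N(\text{shape})$ counts the admissible assignments (c), and it suffices to prove $N(\text{shape})=o(b_n^{(P-l)/2})$ for every shape that can come from a cluster.

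Next, fix a shape with $m$ value–classes and let $v_1,\ldots,v_m\in\{1,\ldots,b_n\}$ be the unknown values. The requirement that each $\mathbf J_k$ be balanced becomes a system of $l$ homogeneous linear equations $\sum_i c_{k,i}v_i=0$, $1\le k\le l$, with integer coefficients bounded by $\max_kp_k$ (here $c_{k,i}$ is the net signed multiplicity of class $i$ inside $\mathbf J_k$). If $\rho$ is the rank of this $l\times m$ matrix, then choosing the $m-\rho$ free coordinates arbitrarily in $\{1,\ldots,b_n\}$ and noting that the remaining $\rho$ coordinates are then forced gives $N(\text{shape})\le b_n^{\,m-\rho}$. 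Since each class has at least two slots, $2m\le P$; writing $\mathrm{exc}:=P-2m\ge0$ for the excess multiplicity, the estimate is reduced to the inequality
\[
m-\rho\le\frac{P-l-1}{2},\qquad\text{equivalently}\qquad 2\rho+\mathrm{exc}\ge l+1,
\]
asserted for every shape arising from a cluster of length $l\ge3$. Granting it, $N(\text{shape})\le b_n^{(P-l-1)/2}=o(b_n^{(P-l)/2})$, and summing over the finitely many shapes finishes the proof.

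Proving $2\rho+\mathrm{exc}\ge l+1$ is, I expect, the main obstacle, and it is here that the hypothesis $l>2$ is used essentially: the inequality is sharp — for $l=3$ equality holds, e.g., for the shape in which one value of multiplicity six is shared by all three vectors while every other class is an internal sign–cancelling pair — so the argument cannot afford to be lossy. The approach is a spanning–tree/incremental argument on the correlation graph of $\mathbf J_1,\ldots,\mathbf J_l$ (vertices: the vectors; edges: correlated pairs), which is connected precisely because the $\mathbf J_k$ form a cluster. Order the vectors so that each $\mathbf J_k$ with $k\ge2$ is correlated with an earlier one, and build the value assignment vector by vector. When $\mathbf J_k$ is adjoined it repeats at least one absolute value already present (it is this per–step repetition, over $k=2,\ldots,l$, that produces the gain over $(P-l)/2$ once $l\ge3$), and among the value–classes it introduces for the first time exactly one of two things happens: either the balance equation of $\mathbf J_k$ constrains them nontrivially, contributing a fresh pivot to the coefficient matrix and hence raising $\rho$; or every newly introduced class already occurs at least twice inside $\mathbf J_k$, which either enlarges $\mathrm{exc}$ or shrinks the free–class budget; and a class introduced only once in $\mathbf J_k$ must, by condition 1, reappear later, augmenting the sharing downstream. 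Maintaining a careful ledger of these contributions across all $l$ steps yields $2\rho+\mathrm{exc}\ge l+1$ when $l\ge3$. (For $l=2$ it degenerates to $2\rho+\mathrm{exc}\ge3$, which genuinely fails for the ``two sign–cancelling chains glued along a single class'' shape — the shape responsible for the leading–order covariance in the main theorems, and the reason only clusters of length $\ge3$ are negligible.)
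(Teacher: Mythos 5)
Your overall skeleton (stratify by combinatorial shape, then count value assignments by a degrees-of-freedom argument) is reasonable, and your diagnosis of why $l=2$ must be excluded is correct, but the step everything hinges on — the inequality $2\rho+\mathrm{exc}\ge l+1$ for every cluster shape — is both unproven and, as stated, false. Counterexample: take $l=3$, $p_1=p_2=p_3=3$, one value-class $A$ occupying exactly one slot in each of the three vectors (so $A$ has union multiplicity $3\ge2$ and the three vectors are pairwise correlated through $A$, hence form a cluster), and every other class an internal sign-cancelling pair inside a single vector. Then every balance equation reads $\pm v_A=0$, so your coefficient matrix has rank $\rho=1$, while $\mathrm{exc}=P-2m=1$; thus $2\rho+\mathrm{exc}=3<4=l+1$, and your bound for this shape is $b_n^{m-\rho}=b_n^{m-1}=b_n^{(P-l)/2}$, which is not $o\bigl(b_n^{(P-l)/2}\bigr)$. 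The shape in fact contributes nothing, but only because the system forces $v_A=0$ while the entries range over $\{\pm1,\ldots,\pm b_n\}$ — a constraint invisible to the rank of the linear system over $\mathbb{Q}$ (the same blind spot occurs for shapes whose balance equations force two classes to take equal values, violating distinctness). So the inequality can at best be asserted for \emph{realizable} shapes, and any proof of it must exploit realizability; your sketched ``spanning-tree ledger'' does not, and since it would apply verbatim to the shape above, where the conclusion fails, it cannot be completed as described. Compounding this, the corrected inequality is attained with equality on genuinely realizable shapes (your own multiplicity-six example), so there is no slack to absorb an imprecise case analysis: the entire difficulty of the lemma is concentrated in the deferred ledger.

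For comparison: the paper does not prove this lemma itself but quotes it from \cite{lsw}, where the argument avoids shape/rank bookkeeping altogether. One orders the vectors so that each is correlated with the union of its predecessors and iterates the reduction $\mathbf{L}=\mathbf{J}\bigvee_{|j_u|}\mathbf{J'}$ introduced in the proof of Lemma \ref{lemma:covariance structure}: each of the $l-1$ merges deletes two coordinates and has only $O(1)$ preimages, leaving a single balanced vector of dimension $d=P-2(l-1)$ in which at most $l-1$ absolute values have multiplicity one. If $k\ge1$ such values are present, one of them is determined by balancedness and the count of admissible $\mathbf{L}$ is $O\bigl(b_n^{(d+k)/2-1}\bigr)=O\bigl(b_n^{(P-l-1)/2}\bigr)$; if $k=0$ the count is $O\bigl(b_n^{d/2}\bigr)=O\bigl(b_n^{(P-2l+2)/2}\bigr)$. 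Both exponents fall strictly below $(P-l)/2$ precisely because $l\ge3$, which is where the hypothesis enters. I would suggest either adopting that route or repairing yours by building the nonzero and distinctness constraints into the per-shape count before attempting the incremental ledger.
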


\old{
\subsection{Some preliminary  integrals}\label{sec:integral for covariance}
To prove  Theorem \ref{thm:multi time fluctuation} and Theorem
\ref{thm: fluctuation for Brownian entries}, we need to define four
functions $\tilde f_I^+(\pi,t_1,t_2)$, $\tilde f_I^-(\pi,t_1,t_2)$,
$\tilde f_{II}^+(\pi,t_1,t_2)$ and $\tilde f_{II}^-(\pi,t_1,t_2)$,
but we suggest readers to skip these definitions now and refer to
them when they are mentioned in the proof of Theorem \ref{thm:multi
time fluctuation} and Theorem \ref{thm: fluctuation for Brownian
entries}.

Suppose $b\in[0,1]$ and $t_1\le t_2$ are numbers in $(0,1/b)$. (We
define $(0,1/0)$ to be $(0,\infty)$.)

Suppose $p+q$ is even. For
$\pi=\{V_1,\ldots,V_{(p+q)/2}\}\in\mathcal{P}_2(p,q)$, set
\begin{align*}
\epsilon_{\pi}(i)=\begin{cases}1&\text{if $i$ is the smallest number
of }\pi^{-1}(\pi(i))\\-1&\text{otherwise}\end{cases}
\end{align*}
where $\pi(i)$ is defined in Section \ref{sec:partition}.

We construct a relation between two groups of unknowns
$y_1,\ldots,y_{p+q}$ and $x_1,\ldots,x_{\frac{p+q}{2}}$ as
$$\epsilon_{\pi}(i)y_i=\epsilon_{\pi}(j)y_j=x_{\pi(i)}$$ whenever
$i\sim_{\pi}j$. Without lost of generality suppose
$V_i\cap\{1,\ldots,p\}\ne\emptyset$ if and only if $i\le s$. For
$x_0,y_0\in[0,1]$ and $x_1,\ldots,x_{\frac{p+q}{2}}\in[-1,1]$, we
define
\begin{align*}
\tilde f_I^-(\pi,t_1,t_2)=&\int_{[0,1]^2}dy_0dx_0\int_{[-t_1,t_1]^s}dx_1\cdots dx_s\int_{[-t_2,t_2]^{-s+(p+q)/2}}dx_{s+1}\cdots dx_{(p+q)/2}\\
\times&\delta(\sum\limits_{i=1}^py_i)\prod_{j=1}^pI_{[0,1]}(x_0+b\sum\limits_{i=1}^jy_i)\prod_{j'=p+1}^{p+q}I_{[0,1]}(y_0+b\sum\limits_{i=p+1}^{j'}y_i)
\end{align*}
and
\begin{align*}
\tilde f_I^+(\pi,t_1,t_2)=&\int_{[0,1]^2}dy_0dx_0\int_{[-t_1,t_1]^s}dx_1\cdots dx_s\int_{[-t_2,t_2]^{-s+(p+q)/2}}dx_{s+1}\cdots dx_{(p+q)/2}\\
\times&\delta(\sum\limits_{i=1}^py_i)\prod_{j=1}^pI_{[0,1]}(x_0+b\sum\limits_{i=1}^jy_i)\prod_{j'=p+1}^{p+q}I_{[0,1]}(y_0-b\sum\limits_{i=p+1}^{j'}y_i)
\end{align*}
where $\delta$ is the Dirac function and $I_{[0,1]}$ is the
indicator function.

For
$\pi=\{V_1,\ldots,V_{\frac{p+q}{2}-1}\}\in\mathcal{P}_{2,4}(p,q)$
(denoting the block with four elements by $V_i$), we set for
$\pi(k)\ne i$
\begin{align*}
\tau_{\pi}(k)=\begin{cases}1& \text{if $k$ is the smallest number of
}\pi^{-1}(\pi(k));\\-1&\text{otherwise}.\end{cases}
\end{align*}
while for $\pi(k)=i$
\begin{align*}
\tau_{\pi}(k)=\begin{cases}1& \text{if $k$ is the smallest or
largest number of
}\pi^{-1}(\pi(k));\\-1&\text{otherwise}.\end{cases}
\end{align*}
For such a $\pi$, we define a relation between two groups of
unknowns $y_1,\ldots,y_{p+q}$ and $x_1,\ldots,x_{\frac{p+q}{2}-1}$
as following: $$\tau_{\pi}(u)y_u=\tau_{\pi}(v)y_v=x_{\pi(u)}$$
whenever $u\sim_{\pi}v$.
%Without lost of generality suppose
%$V_k\cap\{1,\ldots,p\}\ne\emptyset$ if and only if $k\le s$. So $i\le
%s$ since $V_i$ has four elements and two of them are in
%$\{1,\ldots,p\}$.
For $x_0,y_0\in[0,1]$ and $x_1,\ldots,x_{\frac{p+q}{2}-1}\in[-1,1]$,
we define
\begin{align*}
\tilde f_{II}^-(\pi,t_1,t_2)=&\int_{[0,1]^2}dy_0dx_0\int_{[-t_1,t_1]^{p/2}}dx_1\cdots dx_{p/2}\int_{[-t_2,t_2]^{(q/2)-1}}dx_{(p/2)+1}\cdots dx_{-1+(p+q)/2}\\
\times&\prod_{j=1}^pI_{[0,1]}(x_0+b\sum\limits_{i=1}^jy_i)\prod_{j'=p+1}^{p+q}I_{[0,1]}(y_0+b\sum\limits_{i=p+1}^{j'}y_i)
\end{align*}
and
\begin{align*}
\tilde f_{II}^+(\pi,t_1,t_2)=&\int_{[0,1]^2}dy_0dx_0\int_{[-t_1,t_1]^{p/2}}dx_1\cdots dx_{p/2}\int_{[-t_2,t_2]^{(q/2)-1}}dx_{(p/2)+1}\cdots dx_{-1+(p+q)/2}\\
\times&\prod_{j=1}^pI_{[0,1]}(x_0+b\sum\limits_{i=1}^jy_i)\prod_{j'=p+1}^{p+q}I_{[0,1]}(y_0-b\sum\limits_{i=p+1}^{j'}y_i).
\end{align*}
\begin{remark}
$\tilde f_I^\pm(\pi,t_1,t_2)$ and $\tilde f_{II}^\pm(\pi,t_1,t_2)$
are same as the functions $f_I^\pm(\pi)$ and $f_{II}^\pm(\pi)$
defined in \cite{lsw} respectively,
only except that the domains of the integrals are different.\\
\end{remark}
Immediately we have that when $b=0$,
\begin{align}\label{eq: to compute integral of type I when b=0}
\tilde f_I^-(\pi,t_1,t_2)=\tilde
f_I^+(\pi,t_1,t_2)=\int_{[-t_1,t_1]^s}dx_1\cdots
dx_s\int_{[-t_2,t_2]^{-s+(p+q)/2}}dx_{s+1}\cdots
dx_{(p+q)/2}\cdot\delta(\sum\limits_{i=1}^py_i)
\end{align}

\begin{align}\label{eq: to compute integral of type II when b=0}
\tilde f_{II}^-(\pi,t_1,t_2)=\tilde
f_{II}^+(\pi,t_1,t_2)=\int_{[-t_1,t_1]^{p/2}}dx_1\cdots
dx_{p/2}\int_{[-t_2,t_2]^{(q/2)-1}}dx_{(p/2)+1}\cdots
dx_{-1+(p+q)/2}
\end{align}

\begin{lemma}\label{lemma: integrals for b=0}
Suppose $p+q$ is even and $\pi_1\in\mathcal{P}_2(p,q)$. If $\pi_1$
has $k$ crosses, then when $b=0$,
\begin{align}\label{eq: integral of type I when b=0}
\tilde f_I^-(\pi_1,t_1,t_2)=\tilde
f_I^+(\pi_1,t_1,t_2)=2^{\frac{p+q}{2}-1}\,t_1^{\frac{p+k}{2}-1}\,t_2^{\frac{q-k}{2}}.
\end{align}
Suppose $p,q$ are both even and $\pi_2\in\mathcal{P}_{2,4}(p,q)$.
Then when $b=0$,
\begin{align}\label{eq: integral of type II when b=0}
\tilde f_{II}^-(\pi_2,t_1,t_2)=\tilde
f_{II}^+(\pi_2,t_1,t_2)=2^{\frac{p+q}{2}-1}\,t_1^{\frac{p}{2}}\,
t_2^{\frac{q}{2}-1}.
\end{align}
\end{lemma}

\begin{proof}[Proof of Lemma \ref{lemma: integrals for b=0}]
Since $\pi_1$ has $k$ crosses, it has $k$ blocks which intersect
both $\{1,\ldots,p\}$ and $\{p+1,\ldots,p+q\}$. Therefore $\pi_1$
has $\dfrac{p-k}{2}$ blocks totally contained in $\{1,\ldots,p\}$
and $\dfrac{q-k}{2}$ blocks totally contained in
$\{p+1,\ldots,p+q\}$. By definition of the number $s$ we have that
\[
s=\dfrac{p-k}{2}+k=\dfrac{p+k}{2}.
\]
The variables $x_{s+1},\ldots,x_{(p+q)/2}$ correspond to the blocks
totally contained in $\{p+1,\ldots,p+q\}$ thus can take values
freely in $[-t_2,t_2]$. So their contribution to the integral is
$$(2t_2)^{((p+q)/2)-s}=(2t_2)^{(q-k)/2}.$$ Among $x_1,\ldots,x_s$, there are
$\dfrac{p-k}{2}$ of them corresponding to the blocks totally
contained in $\{1,\ldots,p\}$ and they can take value freely in
$[-t_1,t_1]$. So their contribution to the integral is
$$(2t_1)^{(p-k)/2}.$$ The other $k$ variables of $x_1,\ldots,x_s$
correspond to the $k$ crosses. They can only take value in
$[-t_1,t_1]$ but not $[-t_2,t_2]$ since $|t_1|\le|t_2|$. The
restriction $\delta(\sum\limits_{i=1}^py_i)$ is equivalent to the
fact that the sum of these $k$ variables is $0$. Thus this
restriction will take off one degree of freedom of these $k$
variables and their contribution to the integral is
$$(2t_1)^{k-1}.$$ The total integral should be the product of
contribution of all variables, which is
\[
(2t_2)^{(q-k)/2}\cdot(2t_1)^{(p-k)/2}\cdot(2t_1)^{k-1}=2^{(\frac{p+q}{2}-1)}t_1^{(\frac{p+k}{2}-1)}t_2^{(\frac{q-k}{2})}.
\]
Thus we proved (\ref{eq: integral of type I when b=0}). (\ref{eq:
integral of type II when b=0}) comes directly from (\ref{eq: to
compute integral of type II when b=0}).
\end{proof}
}

%%%%%%%%%%%%%%%%%%%%%%%%%%%%%%%%%%%%
%%%%%%%%%%%%%%%%%%%%%%%%%%%%%%%%%%%%
%%%%%%%%%%%%%%%%%%%%%%%%%%%%%%%%%%%%
%%%%%%%%%%%%%%%%%%%%%%%%%%%%%%%%%%%%
%%%%%%%%%%%%%%%%%%%%%%%%%%%%%%%%%%%%
%%%%%%%%%%%%%%%%%%%%%%%%%%%%%%%%%%%%
%%%%%%%%%%%%%%%%%%%%%%%%%%%%%%%%%%%%
%%%%%%%%%%%%%%%%%%%%%%%%%%%%%%%%%%%%
%%%%%%%%%%%%%%%%%%%%%%%%%%%%%%%%%%%%
%%%%%%%%%%%%%%%%%%%%%%%%%%%%%%%%%%%%
%%%%%%%%%%%%%%%%%%%%%%%%%%%%%%%%%%%%
%%%%%%%%%%%%%%%%%%%%%%%%%%%%%%%%%%%%
%%%%%%%%%%%%%%%%%%%%%%%%%%%%%%%%%%%%
%%%%%%%%%%%%%%%%%%%%%%%%%%%%%%%%%%%%
%%%%%%%%%%%%%%%%%%%%%%%%%%%%%%%%%%%%
%%%%%%%%%%%%%%%%%%%%%%%%%%%%%%%%%%%%
%%%%%%%%%%%%%%%%%%%%%%%%%%%%%%%%%%%%
%%%%%%%%%%%%%%%%%%%%%%%%%%%%%%%%%%%%
%%%%%%%%%%%%%%%%%%%%%%%%%%%%%%%%%%%%
%%%%%%%%%%%%%%%%%%%%%%%%%%%%%%%%%%%%
%%%%%%%%%%%%%%%%%%%%%%%%%%%%%%%%%%%%
%%%%%%%%%%%%%%%%%%%%%%%%%%%%%%%%%%%%
%%%%%%%%%%%%%%%%%%%%%%%%%%%%%%%%%%%%

\section{Fluctuation of eigenvalues for matrix with bandwidth proportional to
$t$}\label{sec: fluctuation} The purpose of this section is to prove
Theorem \ref{thm:multi time fluctuation}.

\subsection{Covariance structure of $ \omega_p(t) $}\label{sec: covariance for fluctuation}
The asymptotic covariance structure  of $ \omega_p(t) $ is given by
Lemma \ref{lemma:covariance structure}.
\begin{lemma}\label{lemma:covariance structure}
Suppose all assumptions of Theorem \ref{thm:multi time fluctuation}
hold. If $t_1\le t_2$ are positive numbers in $(0,1/b)$ and $p$, $q$
are natural numbers no less than 2, then
\begin{multline*}
\lim\limits_{n\to\infty}\mathbb{E}[\omega_p(t_1)\omega_q(t_2)]\\
=\sum_{\pi\in \mathcal{\tilde P}_{2 }(p,q)}\left(\tilde
f^{-}_{I}(\pi,t_1,t_2)+\tilde
f^{+}_{I}(\pi,t_1,t_2)\right)+(\kappa-1)\sum_{\pi\in
\mathcal{P}_{2,4}(p,q)}\left(\tilde f^{-}_{II}(\pi,t_1,t_2)+\tilde
f^{+}_{II}(\pi,t_1,t_2)\right)
\end{multline*}
where  $ \omega_p(t_1) ,\omega_q(t_2) $ are defined in
\eqref{eq:omega p} of Section \ref{sec: fluctuation result}. The
integrals $\tilde f^{\pm}_{I}(\pi,t_1,t_2)$ and $\tilde
f^{\pm}_{II}(\pi,t_1,t_2)$ are defined in Appendix \ref{appendix}.
\end{lemma}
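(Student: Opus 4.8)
The plan is to run a second–moment computation by the moment method, following the scheme of \cite{lsw}; as remarked there, only the domains of the limiting integrals change, so much of the bookkeeping can be transcribed. First I would apply the trace formula (Lemma~\ref{lemma:trace formula}) to $\text{tr}(A_n(t_1)^p)$ and $\text{tr}(A_n(t_2)^q)$ and subtract the means, obtaining
\[
\E[\omega_p(t_1)\omega_q(t_2)]=\frac{b_n^{1-\frac{p+q}{2}}}{n^2}\sum_{i_1,i_2=1}^{n}\sum_{\mathbf J,\mathbf K}\Cov(a_{\mathbf J},a_{\mathbf K})\,I(i_1,\mathbf J)\,I(i_2,\mathbf K)\,\delta(j_1+\cdots+j_p)\,\delta(k_1+\cdots+k_q),
\]
where $\mathbf J=(j_1,\dots,j_p)$ ranges over $\{-[b_nt_1],\dots,[b_nt_1]\}^p$, $\mathbf K=(k_1,\dots,k_q)$ over $\{-[b_nt_2],\dots,[b_nt_2]\}^q$, $a_{\mathbf J}=a_{j_1}\cdots a_{j_p}$, $a_{\mathbf K}=a_{k_1}\cdots a_{k_q}$, and $I(i,\mathbf J)$ is the indicator that the walk started at $i$ with increments $j_1,\dots,j_p$ stays inside $[n]$. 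Since the $a_i$ are independent, centered, and satisfy $a_{-i}=a_i$, $\Cov(a_{\mathbf J},a_{\mathbf K})$ vanishes unless (i) $\mathbf J$ and $\mathbf K$ are correlated, so that $a_{\mathbf J}$ and $a_{\mathbf K}$ share at least one of the variables $a_1,a_2,\dots$, and (ii) every absolute value in the combined multiset $S_{\mathbf J}\cup S_{\mathbf K}$ has multiplicity at least two. I would then index the surviving terms by the partition of $[p+q]$ induced by equality of absolute values, with $1,\dots,p$ labelling the entries of $\mathbf J$ and $p+1,\dots,p+q$ those of $\mathbf K$.

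The core of the proof is the power count isolating the leading order. The prefactor carries $b_n^{1-(p+q)/2}n^{-2}$; the two sums over $i_1,i_2$ give a factor of order $n^2$ (times a bounded Riemann-sum factor from the indicators $I(\cdot)$, tending to an integral of indicators when $b>0$ and to $1$ when $b=0$), each free index gives a factor of order $b_n$, and each of the two constraints $j_1+\cdots+j_p=0$, $k_1+\cdots+k_q=0$ that is genuinely active costs a factor $b_n$. Hence a configuration survives in the limit only when these two constraints together remove just one degree of freedom, i.e. one of $\sum_l j_l+\sum_l k_l$, $\sum_l j_l-\sum_l k_l$ vanishes identically on the configuration. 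One checks this forces it into one of two shapes. \emph{Shape I}: a pair partition $\pi$ of $[p+q]$ in which every block inside $\{1,\dots,p\}$ or inside $\{p+1,\dots,p+q\}$ has its two entries opposite in sign, and in which either every cross block has its two entries equal or every cross block has its two entries opposite; the single surviving constraint is then $\sum_{i=1}^{p}y_i=0$ on the cross-variables, and the two sign families produce $\tilde f_I^+$ and $\tilde f_I^-$. \emph{Shape II}: a partition with exactly one block of four entries of equal modulus, two from $\{1,\dots,p\}$ and two from $\{p+1,\dots,p+q\}$, all other blocks being opposite-sign pairs inside one side; then both constraints are automatically satisfied, and the four-block contributes the weight $\E[a_i^4]-(\E[a_i^2])^2=\kappa-1$.

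For Shape I one still needs the residual constraint $\sum_{i=1}^p y_i=0$ to be non-degenerately satisfiable. If $\pi$ has exactly one cross, it forces a cross-variable to be $0$, impossible since $a_0=0$; if $\pi$ has exactly two crosses, it forces two distinct moduli to coincide, a codimension-one event that lowers the order. So only $\pi$ with at least three crosses — that is, $\pi\in\mathcal{\tilde P}_2(p,q)$ — contributes, and since the number of crosses has the parity of $p$ (and of $q$) this means $3,5,\dots$ crosses when $p,q$ are odd and $4,6,\dots$ when $p,q$ are even, while Shape II requires $p,q$ both even; this matches the parities in \eqref{eq: covariance}. All remaining configurations — a modulus of multiplicity $\ge 3$ outside Shape II, two or more blocks of size $\ge 3$, and so on — are $o(1)$ by the same degree-of-freedom bookkeeping, exactly as in \cite{lsw}.

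Finally, for each surviving configuration I would pass to the limit as a Riemann sum: $\tfrac1n\sum_{i_1}$ and $\tfrac1n\sum_{i_2}$ become $\int_{[0,1]^2}dx_0\,dy_0$; each free index divided by $b_n$ becomes an integral over $[-t_1,t_1]$ for cross-variables and internal $\mathbf J$-variables and over $[-t_2,t_2]$ for internal $\mathbf K$-variables (using $t_1\le t_2$); the indicators $I(i_1,\mathbf J)$, $I(i_2,\mathbf K)$ converge, via $b_n/n\to b$, to the products $\prod I_{[0,1]}(x_0+b\sum y_i)$ and $\prod I_{[0,1]}(y_0\pm b\sum y_i)$, the sign recording whether the cross blocks are ``equal'' or ``opposite'' type; and the normalized $\delta(j_1+\cdots+j_p)$ becomes the Dirac factor $\delta(\sum_{i=1}^p y_i)$ in Shape I and disappears in Shape II. This identifies the total contribution of a given $\pi\in\mathcal{\tilde P}_2(p,q)$ as $\tilde f_I^-(\pi,t_1,t_2)+\tilde f_I^+(\pi,t_1,t_2)$ and that of a given $\pi\in\mathcal{P}_{2,4}(p,q)$ as $(\kappa-1)\big(\tilde f_{II}^-(\pi,t_1,t_2)+\tilde f_{II}^+(\pi,t_1,t_2)\big)$, the integrals of Appendix~\ref{appendix}; summing over all such $\pi$ yields the asserted formula. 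I expect the main obstacle to be the classification step of the second and third paragraphs — showing rigorously that exactly the configurations in $\mathcal{\tilde P}_2(p,q)$ and $\mathcal{P}_{2,4}(p,q)$ (with the stated sign structure) survive and everything else is negligible, in particular disentangling the interplay of the two $\delta$-constraints with the $a_0=0$ degeneracy and with the walk-containment indicators — though, since the only change from \cite{lsw} is in the integration domains, most of this analysis can be imported essentially verbatim.
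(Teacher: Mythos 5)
Your proposal is correct and follows essentially the same route as the paper: the trace formula, the moment-method power count isolating the pair partitions in $\mathcal{\tilde P}_2(p,q)$ and the four-block partitions in $\mathcal{P}_{2,4}(p,q)$ with weight $\kappa-1$, and the Riemann-sum passage to $\tilde f_I^{\pm}$ and $\tilde f_{II}^{\pm}$; the only organizational difference is that the paper channels the degree-of-freedom count through the reduction $\mathbf{L}=\mathbf{J}{\bigvee}_{|j_u|}\mathbf{J'}$ to a single balanced $(p+q-2)$-vector (Claims 1 and 2) rather than counting directly on the joint partition. One small imprecision: the two-cross pair partitions are not ``lower order'' --- the balance constraints force the two cross moduli to coincide, so these configurations are exactly the $\mathcal{P}_{2,4}(p,q)$ ones and carry the fourth moment (which is how the paper disposes of them), and your final tally is nevertheless the same.
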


\begin{remark}\label{remark: covairance of Liu Sun Wang's model}
From Lemma \ref{lemma:covariance structure}, the $\sigma_p^2$ which
appeared in Theorem \ref{thm: fluctuation thm of Liu,Sun,Wang} is
\begin{align*}
\sigma_p^2=\sum_{\pi\in \mathcal{\tilde P}_{2 }(p,p)}\left(\tilde
f^{-}_{I}(\pi,1,1)+\tilde f^{+}_{I}(\pi,1,1)\right)\nonumber
+(\kappa-1)\sum_{\pi\in \mathcal{P}_{2,4}(p,p)}\left(\tilde
f^{-}_{II}(\pi,1,1)+\tilde f^{+}_{II}(\pi,1,1)\right)
\end{align*}
\end{remark}

%Now we compute the limit of $\E(\omega_p(t_1)\omega_q(t_2))$ where
%$$\omega_p(t_1)=\frac{\sqrt{b_n}}{n}(\text{tr}A_n(t_1)^p-\E(\text{tr}A_n(t_1)^p))$$
%and
%$$\omega_q(t_2)=\frac{\sqrt{b_n}}{n}(\text{tr}A_n(t_2)^q-\E(\text{tr}A_n(t_2)^q)).$$\\
\begin{proof}[Proof of Lemma \ref{lemma:covariance structure}]
By Lemma \ref{lemma:trace formula} we have
\begin{align*}
\text{tr}A_n(t_1)^p=(\frac{1}{\sqrt{b_n}})^p\sum\limits_{i=1}^n\sum\limits_{j_1,\ldots,j_p=-[b_nt_1]}^{[b_nt_1]}a_{j_1}\cdots a_{j_p}\prod_{l=1}^pI_{[1,n]}(i+\sum\limits_{r=1}^lj_r)\delta(\sum\limits_{r=1}^pj_r),\\
\text{tr}A_n(t_2)^q=(\frac{1}{\sqrt{b_n}})^q\sum\limits_{i=1}^n\sum\limits_{j_1,\ldots,j_q=-[b_nt_2]}^{[b_nt_2]}a_{j_1}\cdots
a_{j_q}\prod_{l=1}^qI_{[1,n]}(i+\sum\limits_{r=1}^lj_r)\delta(\sum\limits_{r=1}^qj_r).
\end{align*}
So
\begin{align}\label{eq: covariance of Omega}
\E[\omega_p(t_1)\omega_q(t_2)]=&(\frac{\sqrt{b_n}}{n})^2\E[(\text{tr}A_n(t_1)^p-\E[\text{tr}A_n(t_1)^p])(\text{tr}A_n(t_2)^q-\E[\text{tr}A_n(t_2)^q])]\nonumber\\
=&\frac{1}{n^2}\frac{1}{b_n^{\frac{p+q}{2}-1}}\sum\limits_{i_1,i_2=1}^n\sum\limits_{\mathbf{J},\mathbf{J'}}\E[(a_{\mathbf{J}}-\E a_{\mathbf{J}})(a_{\mathbf{J'}}-\E a_{\mathbf{J'}})]I(i_1,\mathbf{J})I(i_2,\mathbf{J'})\nonumber\\
=&\frac{1}{n^2}\frac{1}{b_n^{\frac{p+q}{2}-1}}\sum\limits_{i_1,i_2=1}^n\sum\limits_{\mathbf{J},\mathbf{J'}}[\E[a_{\mathbf{J}}a_{\mathbf{J'}}]-\E a_{\mathbf{J}}\E a_{\mathbf{J'}}]I(i_1,\mathbf{J})I(i_2,\mathbf{J'})\nonumber\\
\end{align}
where $\mathbf{J}:=(j_1,\ldots,j_p)$ which runs over
$\{\mathbf{J}\in\{\pm1,\ldots,\pm{[b_nt_1]}\}^p|j_1+\cdots+j_p=0\}$,
$\mathbf{J'}:=(j'_1,\ldots,j'_q)$ which runs over
$\{\mathbf{J'}\in\{\pm1,\ldots,\pm{[b_nt_2]}\}^q|j'_1+\cdots+j'_q=0\}$,
$a_{\mathbf{J}}:=a_{j_1}\cdots a_{j_p}$,
$a_{\mathbf{J'}}:=a_{j'_1}\cdots a_{j'_q}$,
$I(i_1,\mathbf{J}):=\prod_{l=1}^pI_{[1,n]}(i_1+\sum\limits_{r=1}^lj_r)$
and
$I(i_2,\mathbf{J'}):=\prod_{l=1}^qI_{[1,n]}(i_2+\sum\limits_{r=1}^lj'_r)$.
The components of $\mathbf{J}$ and $\mathbf{J'}$ do not take the
value of 0 since $a_0=0$.

Notice that $\E[\omega_p(t_1)\omega_q(t_2)]$ has the same expression
as $\E[\omega_p\omega_q]$ in Section 4 of \cite{lsw}, only except
that in the expression of $\E[\omega_p\omega_q]$ there $\mathbf{J}$
runs over $\{\{\pm1,\ldots,\pm{b_n}\}^p|j_1+\cdots+j_p=0\}$ and
$\mathbf{J'}$ runs over
$\{\{\pm1,\ldots,\pm{b_n}\}^q|j'_1+\cdots+j'_q=0\}$. But this
difference does not cause much problem. So we can use the same
method as in the proof of Theorem 4.1 of \cite{lsw} to compute the
limit of $\E[\omega_p(t_1)\omega_q(t_2)]$.

First recall some definitions given in Section \ref{sec: blanced
vector and cluster}. For a vector $\mathbf{v}=(a_1,\ldots,a_k)$, set
$S_{\mathbf{v}}=\{|a_1|,\ldots,|a_k|\}$. $S_{\mathbf{v}}$ is a set
of non-negative numbers with multiplicity. Two vectors
$\mathbf{v_1},\mathbf{v_2}$ are called correlated if
$S_{\mathbf{v_1}}$ and $S_{\mathbf{v_2}}$ have at least one common
element. If $a_i$ is a component of $\mathbf{v_1}$ and $|a_i|\in
S_{\mathbf{v_1}}\cap S_{\mathbf{v_2}}$, then $a_i$ is called a joint
point of $\mathbf{v_1}$ and $\mathbf{v_2}$. A vector is called
balanced if the sum of its components is 0.

So by independence of the random variables, a term
$\E[a_{\mathbf{J}}a_{\mathbf{J'}}]-\E a_{\mathbf{J}}\E
a_{\mathbf{J'}}$ in (\ref{eq: covariance of Omega}) is not 0 only
when each element of $S_{\mathbf{J}}\cup S_{\mathbf{J'}}$ has
multiplicity 2 or more.

Most of the following parts of this proof are taken from the proof
of Theorem 4.1 of \cite{lsw}.

%%%%%%%%%%%%%%%%%%%%%%%%%%%%%%%%%%%%%%%%%%%%%%%%%%%%%%%%%%%%%%%%%%%%%%%%%%%%%%%%%%%%%%%%%%%%%%%%%%%%%%%%%

We first  construct a map from the ordered correlated pair
$\mathbf{J}=(j_1,\ldots,j_p)$ and $\mathbf{J'}=(j'_1,\ldots,j'_q)$
as follows. Let $j_{u}\in \mathbf{J}$ be the first joint point
(whose subscript is the smallest) of the ordered correlated pair
$\mathbf{J}$ and $\mathbf{J'}$, and let $j'_v$ be the first element
in $\mathbf{J'}$ such that $|j_{u}|=|j'_{v}|$. If $j_{u}=-j'_{v}$,
we construct a vector $\mathbf{L}=(l_{1},\ldots,l_{p+q-2})$ such
that
\begin{align}l_{1}=j_{1},\ldots,&l_{u-1}=j_{u-1},
l_{u}=j'_{1},\ldots,l_{u+v-2}=j'_{v-1},\nonumber\\
&l_{u+v-1}=j'_{v+1}, \ldots,l_{u+q-2}=j'_{q},
l_{u+q-1}=j_{u+1},\ldots l_{p+q-2}=j_{p}.\nonumber
\end{align}
It is obvious that $$\sum\limits_{k=1}^{p+q-2}l_{k}=0,$$ so
$\mathbf{L}$ is balanced. If $j_{u}=j'_{v}$, then from $\mathbf{J}$
and $-\mathbf{J'}=(-j'_{1},\ldots,-j'_{q})$ we proceed in the way as
above. We call this process of constructing $\mathbf{L}$ from
$\mathbf{J}$ and $\mathbf{J'}$ a $reduction$ step and denote it by
$\mathbf{L}=\mathbf{J}{\bigvee}_{|j_u|}\mathbf{J'}$.

Notice that the reduction might cause the appearance of one number
with multiplicity 1 in $S_{\mathbf{L}}$, although each number in the
union of $S_{\mathbf{J}}$ and $S_{\mathbf{J'}}$ occurs at least
twice. If so, the resulting
 number with multiplicity 1 in $S_{\mathbf{L}}$ must be coincident with the joint point
 $j_{u}$.

 Next, suppose we have a balanced vector $\mathbf{L}$ of dimension ($p+q-2$). We shall estimate in how many different
 ways  it can be obtained from correlated pairs of dimensions $p$ and $q$. First, we have to choose
 some component
 $l_{u}$ in the first half of the vector, $1\leq u \leq p$ such that
 \be{\label{premagecondition1}\left| \sum\limits_{i=u}^{u+q-2}l_{i}\right|\neq |l_{j}|,\ \ j=1,\ldots,u-1.}\ee
 Set $\mathbf{J}=(j_{1},\ldots,j_{p})$ with
\be{j_{1}=l_{1},\ldots,j_{u-1}=l_{u-1},j_{u}=\sum\limits_{i=u}^{u+q-2}l_{i},j_{u+1}=l_{u+q-1},
\ldots,j_{p}=l_{p+q-2}. }\ee  We also  have to choose
 some component
 $l_{u+v-1}$, $1\leq v \leq q-1$ such that
 \be{\label{premagecondition2}\left| \sum\limits_{i=u}^{u+q-2}l_{i}\right|\neq |l_{j}|,\ \ j=u,\ldots,u+v-2}\ee whenever $v\geq 2$.
  Set $\mathbf{J'}=(j'_{1},\ldots,j'_{q})$ with
\be{j'_{1}=l_{u},\ldots,j'_{v-1}=l_{u+v-2},j'_{v}=-\sum\limits_{i=u}^{u+q-2}l_{i},j'_{v+1}=l_{u+v-1},
\ldots,j'_{p}=l_{u+q-2}.}\ee If $j_{u}$ is the joint point of the
constructed correlated pair $\mathbf{J}$ and $\mathbf{J'}$ and
$j'_v$ is the corresponding element in $\mathbf{J'}$, then the pair
$\{\mathbf{J},\mathbf{J'}\}$ or $\{\mathbf{J},-\mathbf{J'}\}$ is the
pre-image of $\mathbf{L}$. Note that since when $u=v=1$ the
conditions (\ref{premagecondition1}) and (\ref{premagecondition2})
are satisfied, the pre-image of $\mathbf{L}$ is always nonempty. A
simple estimation shows the following claim.\\

\centerline{ \textbf{Claim 1:} The number of pre-images of
$\mathbf{L}$ is at most $2pq$, a number not depending on $n$.}

\vspace{0.5cm} Remember that there is at most one element with
multiplicity 1 in $S_{\mathbf{L}}$. If there is one number with
multiplicity 1, then this number will be determined by others
because $\mathbf{L}$ is balanced. Consequently, the degree of
freedom for such terms is at most $\frac{p+q-2-1}{2}$, i.e., the
number of such $(\mathbf{J},\mathbf{J'})$ is
$O(b_n^{\frac{p+q-2-1}{2}})$. Therefore, the contribution of these
terms to $\E[\omega_p(t_1)\omega_q(t_2)]$ is $O(b^{-1/2}_{n})=o(1)$
because of the uniform boundedness of the moments of the entries
$a_j$ with order no more than $p+q$ and the coefficient
$\big(b_n^{\frac{p+q}{2}-1}\big)^{-1}$ in \eqref{eq: covariance of
Omega}. Now we suppose each number in $S_{\mathbf{L}}$ occurs at
least twice.

%%%%%%%%%%%%%%%%%%%%%%%%%%%%%%%%%%%%%%%%%%%%%%%%%%%%%%%%%%%%
Thus $S_{\mathbf{L}}$ has at most $[\frac{p+q-2}{2}]$ distinct
elements. When elements of $S_{\mathbf{L}}$ are specified, there are
no more than
\[
2^{p+q-2}[\frac{p+q-2}{2}]^{p+q-2}
\]
ways to specify $l_1,\ldots,l_{p+q-2}$ and the above number does not
depend on $ n $. Again by the uniform boundedness of the moments of
the entries $a_j$ with order no more than $p+q$,
\[
\frac{1}{b_n^{\frac{p+q}{2}-1}}\sum\limits_{\mathbf{J},\mathbf{J'}}[\E[a_{\mathbf{J}}a_{\mathbf{J'}}]-\E
a_{\mathbf{J}}\E
a_{\mathbf{J'}}]=O(b_n^{[\frac{p+q-2}{2}]-\frac{p+q-2}{2}}).
\]

This implies that
\[
\lim\limits_{n\to\infty}\E[\omega_p(t_1)\omega_q(t_2)]=0
\]
when $p+q$ is odd. So we only need to consider the case that $p+q$
is even. If $S_{\mathbf{L}}$ has a term with multiplicity no less
than 3, then $S_{\mathbf{L}}$ has at most $[\frac{p+q-2-3}{2}+1]$
distinct numbers, thus the contribution of such terms to
$\E[\omega_p(t_1)\omega_q(t_2)]$ is again $o(1)$.

Therefore we only need to deal with the case that each element of
$S_{\mathbf{L}}$ has multiplicity 2, thus there exists
$\pi\in\mathcal{P}_2(p+q-2)$ (see Section \ref{sec:partition}) such
that if $s\sim_{\pi}w$ then $|l_{s}|=|l_{w}|$.

The condition
\[
l_1+\cdots+l_{p+q-2}=0
\]
implies that the main contribution to
$\E[\omega_p(t_1)\omega_q(t_2)]$ comes from the case that for each
pair $s\sim_{\pi}w$,
\[
 l_{s}=-l_{w}.
\]
That's because otherwise there exists $s_0,w_0$ such that
\[
l_{s_0}=l_{w_0}=-\frac{1}{2}\Big[\sum\limits_{t\in[p+q-2]\backslash\{s_0,w_0\}}l_t\Big]
\]
which implies that the value of $l_{s_0}$ and $l_{w_0}$ will be
determined by the value of  $l_t,t\not\in \{s_0,w_0\}$. Therefore
there is a loss of at least one degree of freedom, which makes the
contribution of such terms be $o(1)$. Then we have:

\vspace{0.5 cm}

\centerline{ \textbf{Claim 2:} The main contribution to (\ref{eq:
covariance of Omega}) comes from the case that $p+q$ is even and
$\mathbf{L}\in\Gamma_{1}(p+q-2)$.}

\vspace{0.5cm} Here $\Gamma_{1}(p+q-2)$ denotes a set of vectors in
$\R^{p+q-2}$: a vector is in $\Gamma_{1}(p+q-2)$ if and only if each
of its components has the same absolute value as exactly one other
component of the opposite sign. For $\mathbf{L}\in
\Gamma_{1}(p+q-2)$ the weight \be{\E(a_{\mathbf{J}}a_{\mathbf{J'}})-
\E[a_{\mathbf{J}}]\E[a_{\mathbf{J'}}]=\E[\prod^{p}_{s=1}a_{j_{s}}\prod^{q}_{t=1}a_{j'_{t}}]-
\E[\prod^{p}_{s=1}a_{j_{s}}]\E[\prod^{q}_{t=1}a_{j'_{t}}] }\ee
equals to $(\E[a_i^2])^{\frac{p+q}{2}}$ if $j_{u}$ is not coincident
with any component of $\mathbf{L}$; otherwise the weight is either
$\E[|a_{j_{u}}|^{4}](\E[a_i^2])^{\frac{p+q-4}{2}}$ or
$\E[|a_{j_{u}}|^{4}](\E[a_i^2])^{\frac{p+q-4}{2}}-(\E[a_i^2])^{\frac{p+q}{2}}$.

So far  we have found the terms leading to the main contribution.
Now we calculate the variance based on whether or not the fourth
moment appears. If the fourth moment doesn't appear, then
$j_{1},\ldots,j_{p}, j'_{1},\ldots,j'_{q}$ match in pairs. In the
abstract, by their subscripts  they can be treated as pair
partitions of $\{1, 2, \ldots, p, p+1, \ldots, p+q\}$ but with at
least one cross (i.e., in $\mathcal{P}_{2}(p,q)$). Thus, for every
$\pi \in \mathcal{P}_{2}(p,q)$, the summation can be a Riemann sum
and its limit becomes $\tilde f^{-}_{I}(\pi)$ ( it is $\tilde
f^{+}_{I}(\pi)$
 when the first
 coincident components in $\mathbf{J}$ and $\mathbf{J'}$   have the  same
 sign). If $\pi$ has only one cross, say $w_1\sim_\pi w_2$ with $w_1\le p<w_2$, then
$j_{w_1}=j'_{w_2}=0$ since $\mathbf{J},\mathbf{J'}$ are balanced.
Thus
\[
\E[a_{\mathbf{J}}a_{\mathbf{J'}}]-
\E[a_{\mathbf{J}}]\E[a_{\mathbf{J'}}]=0
\]
because $a_0=0$. If $\pi$ has two crosses, say $u_1\sim_\pi u_2$
with $u_1\le p<u_2$ and $v_1\sim_\pi v_2$ with $v_1\le p<v_2$, then
$|j_{u_1}|=|j'_{u_2}|=|j_{v_1}|=|j'_{v_2}|$ and the fourth moment
$\kappa$ must appear in $\E[a_{\mathbf{J}}a_{\mathbf{J'}}]-
\E[a_{\mathbf{J}}]\E[a_{\mathbf{J'}}]$. So we should only consider
the permutations in $\mathcal{P}_2(p,q)$ with at least 3 crosses for
the case that the fourth moment does not appear. In other words, we
should only consider permutations in $\mathcal{\tilde P}_2(p,q)$
(see Section \ref{sec:partition}) for this case.

On the other hand,   if the fourth moment does  appear,
 then
 $j_{1},\ldots,j_{p}, j'_{1},\ldots,j'_{q}$ match in pairs except that
 there exists a block with four elements. Therefore, from the balance of $\mathbf{J}$ and $\mathbf{J'}$ we know that the main contribution comes from such partitions:
$j_{1},\ldots,j_{p}$ and  $j'_{1},\ldots,j'_{q}$ both form pair
partitions; the block with four elements take respectively from a
pair of $j_{1},\ldots,j_{p}$ and  $j'_{1},\ldots,j'_{q}$. Otherwise,
the degree of freedom decreases by at least one.  So by their
subscripts $j_1,\ldots,j_p,j_1'\ldots,j_q'$ can be treated as
partitions in $\mathcal{P}_{2,4}(p,q)$. Similarly, for every $\pi
\in \mathcal{P}_{2,4}(p,q)$, the corresponding summation can be a
Riemann sum and its limit becomes $\tilde f^{-}_{II}(\pi)$ (it is
$\tilde f^{+}_{II}(\pi)$
 when the first
 coincident components in $\mathbf{J}$ and $\mathbf{J'}$   have the  same
 sign).

 % is not
%coincident with any component of $L$,
Noticing that the
 coincident components in $\mathbf{J}$ and $\mathbf{J'}$ may have the same or opposite
 sign, we conclude that
\begin{align}\label{eq: covariance for cite later}
 &\lim\limits_{n\to\infty}\E[\omega_{p}(t_1)\,\omega_{q}(t_2)]\nonumber\\
=&(\E[a_i^2])^{\frac{p+q}{2}}\Big[\sum_{\pi\in \mathcal{\tilde P}_{2
}(p,q)}\left(\tilde f^{-}_{I}(\pi,t_1,t_2) +\tilde
f^{+}_{I}(\pi,t_1,t_2)\right)\Big]\nonumber\\
+&\Big(\E[|a_{j_{u}}|^{4}](\E[a_i^2])^{\frac{p+q-4}{2}}-(\E[a_i^2])^{\frac{p+q}{2}}\Big)\Big[\sum_{\pi\in
\mathcal{P}_{2,4}(p,q)}\left(\tilde f^{-}_{II}(\pi,t_1,t_2)+\tilde
f^{+}_{II}(\pi,t_1,t_2)\right)\Big].
\end{align}
Since $\E[a_i^2]=1$ and $\E[a_i^4]=\kappa$ for all $i\ne0$, the
lemma is proved.

\end{proof}

%%%%%%%%%%%%%%%%%%%%%%%%%%%%%%%%%%%%%%%%%%%%%%%%%%%%%%%%%%%%%%%%%%%%%%%%%%%%%%%%%%%%%%%%%%%%%%%%%%%%%%%%%
%%$\lim\limits_{n\to\infty}E(\omega_p(t_1)\omega_q(t_2))=0$ when $p+q$
%%is odd. While when $p+q$ is even,
%%\[
%%\lim\limits_{n\to\infty}E(\omega_p(t_1)\omega_q(t_2))=\sum\limits_{\pi\in\mathcal{P}_2(p,q)}(\tilde
%%f_I^+(\pi)+\tilde
%%f_I^-(\pi))+(\kappa-1)\sum\limits_{\pi\in\mathcal{P}_{2,4}(p,q)}(\tilde
%%f_{II}^+(\pi)+\tilde f_{II}^-(\pi)).
%%\]

%%%%%%%%%%%%%%%%%%%%%%%%%%%%%%%%%%%%%%%%%%%%%%%%%%%%%%%%%%%%%%%%%%%%%%%%%%%%%%%%%%%%%%%%%%%%%%%%%%%%%%%%%

\begin{proposition}\label{prop: covariance for the first model}
Suppose $b=0$. Let $p,q$ be natural numbers greater than 1 and
$0<t_1\le t_2$. Then
\begin{align*}
&\lim\limits_{n\to\infty}\E[\omega_{p}(t_1)\,\omega_{q}(t_2)]\nonumber\\
=&\begin{cases}\sum\limits_{k=3,5,\ldots,\min\{p,q\}}R_1(p,q,k)t_1^{\frac{p+k}{2}-1}t_2^{\frac{q-k}{2}}2^{\frac{p+q}{2}}&\text{if }p,q\text{ are both odd}\\
\sum\limits_{k=4,6,\ldots,\min\{p,q\}}R_1(p,q,k)t_1^{\frac{p+k}{2}-1}t_2^{\frac{q-k}{2}}2^{\frac{p+q}{2}}+(\kappa-1)R_2(p,q)t_1^{\frac{p}{2}}t_2^{\frac{q}{2}-1}2^{\frac{p+q}{2}}&\text{if
}p,q\text{ are both even}\\
0&\text{otherwise}
\end{cases}.
\end{align*}
\end{proposition}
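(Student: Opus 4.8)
The plan is to combine Lemma~\ref{lemma:covariance structure} with (a) the explicit values of the four integrals when $b=0$ and (b) a count of the two partition families that appear there. By Lemma~\ref{lemma:covariance structure},
\[
\lim_{n\to\infty}\E[\omega_p(t_1)\omega_q(t_2)]=\sum_{\pi\in\mathcal{\tilde P}_2(p,q)}\bigl(\tilde f_I^-(\pi,t_1,t_2)+\tilde f_I^+(\pi,t_1,t_2)\bigr)+(\kappa-1)\sum_{\pi\in\mathcal{P}_{2,4}(p,q)}\bigl(\tilde f_{II}^-(\pi,t_1,t_2)+\tilde f_{II}^+(\pi,t_1,t_2)\bigr),
\]
so it suffices to evaluate each summand at $b=0$ and then count, stratifying $\mathcal{\tilde P}_2(p,q)$ by the number of crosses of $\pi$.

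For the integrals I would invoke the $b=0$ evaluation from Appendix~\ref{appendix}: when $b=0$ every indicator factor $I_{[0,1]}(\cdot)$ is identically $1$ on the relevant domain, so $\tilde f_I^-(\pi,t_1,t_2)=\tilde f_I^+(\pi,t_1,t_2)$ is just the volume of a product of intervals cut by the single hyperplane coming from $\delta(\sum_{i\le p}y_i)$. If $\pi$ has $k$ crosses then it has $(p-k)/2$ blocks inside $\{1,\dots,p\}$ and $(q-k)/2$ inside $\{p+1,\dots,p+q\}$, so $s=(p-k)/2+k=(p+k)/2$ blocks meet $\{1,\dots,p\}$; tracking which of the $x$-variables are confined to $[-t_1,t_1]$ (the $s$ of them, one degree of freedom being killed by the constraint) versus $[-t_2,t_2]$ gives $\tilde f_I^\pm(\pi,t_1,t_2)=2^{(p+q)/2-1}t_1^{(p+k)/2-1}t_2^{(q-k)/2}$, and the same kind of count gives $\tilde f_{II}^\pm(\pi,t_1,t_2)=2^{(p+q)/2-1}t_1^{p/2}t_2^{q/2-1}$ for every $\pi\in\mathcal{P}_{2,4}(p,q)$. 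Hence each summand of the first sum equals $2^{(p+q)/2}t_1^{(p+k)/2-1}t_2^{(q-k)/2}$ (depending on $\pi$ only through $k$) and each summand of the second equals $(\kappa-1)2^{(p+q)/2}t_1^{p/2}t_2^{q/2-1}$, so what remains is pure enumeration.

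For the count: a pair partition of $[p+q]$ with exactly $k$ crosses is obtained by choosing the $k$ crossing legs inside $\{1,\dots,p\}$ and inside $\{p+1,\dots,p+q\}$ and matching them ($\binom{p}{k}\binom{q}{k}k!$ ways), then pair-partitioning the remaining $p-k$ and $q-k$ points within each side ($(p-k-1)!!(q-k-1)!!$ ways), which is exactly $R_1(p,q,k)$. Feasibility forces $k\equiv p\equiv q\pmod 2$ and membership in $\mathcal{\tilde P}_2(p,q)$ forces $k\ge 3$, so $k$ runs over $3,5,\dots,\min\{p,q\}$ when $p,q$ are both odd and over $4,6,\dots,\min\{p,q\}$ when both even; if $p+q$ is odd both $\mathcal{\tilde P}_2(p,q)$ and $\mathcal{P}_{2,4}(p,q)$ are empty and the limit is $0$. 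For $\mathcal{P}_{2,4}(p,q)$ (nonempty only when $p,q$ are both even) one picks the two left-legs and two right-legs of the four-element block ($\binom{p}{2}\binom{q}{2}$ ways) and pair-partitions the rest ($(p-3)!!(q-3)!!$ ways); the identity $\binom{p}{2}(p-3)!!=\tfrac p2(p-1)!!$ turns this into $R_2(p,q)=\tfrac{pq}{4}(p-1)!!(q-1)!!$. Substituting the summand values and these counts into the displayed identity and separating the parity cases produces exactly the claimed three-case formula.

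The argument is mostly bookkeeping; the parts that need care are the $b=0$ reduction of the integrals in Appendix~\ref{appendix} — in particular correctly identifying $s$ with $(p+k)/2$, keeping straight which variables live in $[-t_1,t_1]$ versus $[-t_2,t_2]$, and noting that the balance constraint removes exactly one degree of freedom among the $k$ cross-variables — together with the parity ranges of $k$ and the elementary double-factorial identity needed to recognize $R_2(p,q)$.
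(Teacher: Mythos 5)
Your proposal is correct and follows essentially the same route as the paper: apply Lemma~\ref{lemma:covariance structure}, evaluate $\tilde f_I^{\pm}$ and $\tilde f_{II}^{\pm}$ at $b=0$ via Lemma~\ref{lemma: integrals for b=0} (your re-derivation of those values matches the appendix argument, including $s=(p+k)/2$ and the one degree of freedom lost to the balance constraint), and then count the partitions with $k$ crosses and the elements of $\mathcal{P}_{2,4}(p,q)$ exactly as the paper does, with the same parity bookkeeping giving the ranges $k=3,5,\ldots$ and $k=4,6,\ldots$.
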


\begin{proof}
For a finite set $\Phi$, we use $\big|\Phi\big|$ to denote its
cardinality.

When $p,q$ are both odd, $\mathcal{P}_{2,4}(p,q)=\emptyset$. So from
Lemma \ref{lemma:covariance structure}
\[
\lim\limits_{n\to\infty}\E[\omega_{p}(t_1)\,\omega_{q}(t_2)]=\sum_{\pi\in
\mathcal{\tilde P}_{2 }(p,q)}\left(\tilde
f^{-}_{I}(\pi,t_1,t_2)+\tilde f^{+}_{I}(\pi,t_1,t_2)\right).
\]
It's easy to see that for $\pi\in\mathcal{\tilde P}_{2 }(p,q)$, the
number of crosses of $\pi$ can only be a number in
$\{3,5,\ldots,\min\{p,q\}\}$ since $p,q$ are odd. Thus from lemma
\ref{lemma: integrals for b=0} we have
\[
\lim\limits_{n\to\infty}\E[\omega_{p}(t_1)\,\omega_{q}(t_2)]=\sum\limits_{k=3,5,\ldots,\min\{p,q\}}2^{\frac{p+q}{2}}t_1^{(\frac{p+k}{2}-1)}t_2^{(\frac{q-k}{2})}\cdot\bigg|\big\{\pi\in\mathcal{\tilde
P}_2(p,q)\big|\pi\text{ has }k\text{ crosses}\big\}\bigg|.
\]
Simple enumeration shows that $\bigg|\big\{\pi\in\mathcal{\tilde
P}_2(p,q)\big|\pi\text{ has }k\text{ crosses}\big\}\bigg|$ is
${p\choose k}{q\choose k}k!(p-k-1)!!(q-k-1)!!$. Recalling
$R_1(p,q,k)={p\choose k}{q\choose k}k!(p-k-1)!!(q-k-1)!!$, we proved
the case when $p,q$ are both odd.

When $p,q$ are both even, then the number of crosses of a
permutation $\pi\in\mathcal{\tilde P}_{2 }(p,q)$ can only be a
number in $\{4,6,\ldots,\min\{p,q\}\}$. So from lemma \ref{lemma:
integrals for b=0} we have
\begin{align*}
&\lim\limits_{n\to\infty}\E[\omega_{p}(t_1)\,\omega_{q}(t_2)]\\
=&\sum\limits_{k=4,6,\ldots,\min\{p,q\}}2^{\frac{p+q}{2}}t_1^{(\frac{p+k}{2}-1)}t_2^{(\frac{q-k}{2})}\cdot\bigg|\big\{\pi\in\mathcal{\tilde
P}_2(p,q)\big|\pi\text{ has }k\text{ crosses}\big\}\bigg|\\
+&(\kappa-1)2^{\frac{p+q}{2}}t_1^{(\frac{p}{2})}t_2^{(\frac{q}{2}-1)}\cdot\bigg|\mathcal{P}_{2,4}(p,q)\bigg|.
\end{align*}
By simple enumeration we have that
$\bigg|\mathcal{P}_{2,4}(p,q)\bigg|$ equals
${p\choose2}{q\choose2}(p-3)!!(q-3)!!=\frac{p\,q}{4}(p-1)!!(q-1)!!$.
Similarly as in the case of odd $p,q$ we have
$\bigg|\big\{\pi\in\mathcal{\tilde P}_2(p,q)\big|\pi\text{ has
}k\text{ crosses}\big\}\bigg|={p\choose k}{q\choose
k}k!(p-k-1)!!(q-k-1)!!$. Using the definitions of $R_1(p,q,k)$ and
$R_2(p,q,k)$ we complete the proof.
\end{proof}

%%%%%%%%%%%%%%%%%%%%%%%%%%%%%%%%%%%%%%%%%
%%%%%%%%%%%%%%%%%%%%%%%%%%%%%%%%%%%%%%%%%
%%%%%%%%%%%%%%%%%%%%%%%%%%%%%%%%%%%%%%%%%
%%%%%%%%%%%%%%%%%%%%%%%%%%%%%%%%%%%%%%%%%
%%%%%%%%%%%%%%%%%%%%%%%%%%%%%%%%%%%%%%%%%
%%%%%%%%%%%%%%%%%%%%%%%%%%%%%%%%%%%%%%%%%
%%%%%%%%%%%%%%%%%%%%%%%%%%%%%%%%%%%%%%%%%
%%%%%%%%%%%%%%%%%%%%%%%%%%%%%%%%%%%%%%%%%
%%%%%%%%%%%%%%%%%%%%%%%%%%%%%%%%%%%%%%%%%
%%%%%%%%%%%%%%%%%%%%%%%%%%%%%%%%%%%%%%%%%
%%%%%%%%%%%%%%%%%%%%%%%%%%%%%%%%%%%%%%%%%
%%%%%%%%%%%%%%%%%%%%%%%%%%%%%%%%%%%%%%%%%
%%%%%%%%%%%%%%%%%%%%%%%%%%%%%%%%%%%%%%%%%
%%%%%%%%%%%%%%%%%%%%%%%%%%%%%%%%%%%%%%%%%
%%%%%%%%%%%%%%%%%%%%%%%%%%%%%%%%%%%%%%%%%
%%%%%%%%%%%%%%%%%%%%%%%%%%%%%%%%%%%%%%%%%
%%%%%%%%%%%%%%%%%%%%%%%%%%%%%%%%%%%%%%%%%
%%%%%%%%%%%%%%%%%%%%%%%%%%%%%%%%%%%%%%%%%
%%%%%%%%%%%%%%%%%%%%%%%%%%%%%%%%%%%%%%%%%
%%%%%%%%%%%%%%%%%%%%%%%%%%%%%%%%%%%%%%%%%

\subsection{Multi-point fluctuation }\label{sec:Gaussian fluctuation proof}
In this section we prove Theorem \ref{thm:multi time fluctuation}.

\begin{proof}[Proof of Theorem \ref{thm:multi time fluctuation}]
The main idea of the proof is to show that the joint moments of
$(\omega_{p_1}(t_1),\ldots,\omega_{p_r}(t_r))$ converge to the joint
moments of a multivariate Gaussian distribution
$(X_{t_1},\ldots,X_{t_r})$.\\

\textbf{Step 1}\\

By Lemma \ref{lemma:trace formula} we have
\begin{multline*}
\E\big[\omega_{p_1}(t_1)\cdots\omega_{p_r}(t_r)\big]\\
=n^{-r}b_n^{-\frac{p_1+\cdots+p_r-r}{2}}\sum\limits_{i_1,\ldots,i_r=1}^n\sum\limits_{\mathbf{J_l},\ldots,\mathbf{J_r}}\E\big[(a_{\mathbf{J_l}}-\E a_{\mathbf{J_l}})\cdots(a_{\mathbf{J_r}}-\E a_{\mathbf{J_r}})\big]I(i_1,\mathbf{J_l})\cdots I(i_r,\mathbf{J_r})\\
\end{multline*}
where each $\mathbf{J_k}=(j_1(k),\ldots,j_{p_k}(k))$ which runs over

\begin{align}\label{mathbf{J_i}}
\Big\{(j_1(k),\ldots,j_{p_k}(k))\in\{\pm1,\ldots,\pm[b_nt_k]\}^{p_k}\Big|j_1(k)+\cdots+j_{p_k}(k)=0\Big\},
\end{align}
$a_{\mathbf{J_k}}=a_{j_1(k)}\cdots a_{j_{p_k}(k)}$ and
$I(i_k,\mathbf{J_k})=\prod_{s=1}^{p_k}I_{[1,n]}(i_k+j_1(k)+\cdots+j_s(k))$.\\

\textbf{Step 2}\\

We have the following observations.
\begin{enumerate}
\item The set \eqref{mathbf{J_i}} is a subset of
$$\Big\{(j_1(i),\ldots,j_{p_i}(i))\in\{\pm1,\ldots,\pm b_n\}^{p_i}\Big|j_1(i)+\cdots+j_{p_i}(i)=0\Big\}.$$

\item For fixed $\mathbf{J_l},\ldots,\mathbf{J_r}$, if there exists $1\le
i\le r$ such that $\mathbf{J_i}$ is correlated with none of the
others of $\mathbf{J_l},\ldots,\mathbf{J_r}$, then
$\E[(a_{\mathbf{J_l}}-\E a_{\mathbf{J_l}})\cdots(a_{\mathbf{J_r}}-\E
a_{\mathbf{J_r}})]=0$ because of the independence of
$a_1,a_2,\cdots$.

%\item For fixed $\mathbf{J_l},\ldots,\mathbf{J_r}$, if 0 is a component of
%any one of $\mathbf{J_l},\ldots,\mathbf{J_r}$, then
%$\E[(a_{\mathbf{J_l}}-\E a_{\mathbf{J_l}})\cdots(a_{\mathbf{J_r}}-\E
%a_{\mathbf{J_r}})]$
%must be 0.

\item When $\mathbf{J_l},\ldots,\mathbf{J_r}$ vary,
$\E[(a_{\mathbf{J_l}}-\E a_{\mathbf{J_l}})\cdots(a_{\mathbf{J_r}}-\E
a_{\mathbf{J_r}})]$ is bounded because of the boundedness of the
moments of $a_1,a_2,\dotsm$. In other words, there exists $M>0$ such
that
$$|\E[(a_{\mathbf{J_l}}-\E a_{\mathbf{J_l}})\cdots(a_{\mathbf{J_r}}-\E
a_{\mathbf{J_r}})]|\le M$$ holds uniformly.
\end{enumerate}

\textbf{Step 3}\\

Consider $\{\mathbf{J_l},\ldots,\mathbf{J_r}\}$ where each
$\mathbf{J_i}$ is from (\ref{mathbf{J_i}}). It can always decompose
into several clusters (see Section \ref{sec: blanced vector and
cluster}), say $\mathbf{C_1},\ldots,\mathbf{C_d}$. If one of the
clusters has length 1, then from the independence of
$a_0,a_1,\ldots$ we have
$$\E[(a_{\mathbf{J_l}}-\E a_{\mathbf{J_l}})\cdots(a_{\mathbf{J_r}}-\E
a_{\mathbf{J_r}})]=0.$$ If a cluster $\mathbf{C_i}$ has two vectors,
say $\mathbf{C_i}=\{\mathbf{J_u},\mathbf{J_v}\}$, then by the same
argument as in the proof of Lemma \ref{lemma:covariance structure}
we know the number of ways we specify $\mathbf{J_u}$ and
$\mathbf{J_v}$ is $O(b_n^{(p_u+p_v-2)/2})$ as $n\to\infty$.
Therefore the contribution of $\mathbf{C_i}$ to
$\E[\omega_{p_1}(t_1)\cdots\omega_{p_r}(t_r)]$ is $O(1)$ as
$n\to\infty$. If a cluster $\mathbf{C_k}$ has more than two vectors,
say $\mathbf{C_k}=\{\mathbf{J_{k_1}},\ldots,\mathbf{J_{k_w}}\}$
$(w>2)$, then from Lemma \ref{lemma:cluster estimate}, the number of
ways we specify its vectors is
$o(b_n^{(p_{k_1}+\cdots+p_{k_w}-w)/2})$ as $n\to\infty$. So from the
above Observation 3, the contribution of $\mathbf{C_k}$ to
$\E[\omega_{p_1}(t_1)\cdots\omega_{p_r}(t_r)]$ is $o(1)$.

Therefore the contribution of $\{\mathbf{J_l},\ldots,\mathbf{J_r}\}$
to
$\E[\omega_{p_1}(t_1)\cdots\omega_{p_r}(t_r)]$ is not $o(1)$ only\\
when $\{\mathbf{J_l},\ldots,\mathbf{J_r}\}$ can decompose into
clusters of length 2. So $r$ is even.

This implies that
$\lim\limits_{n\to\infty}\E[\omega_{p_1}(t_1)\cdots\omega_{p_r}(t_r)]=0$
when $ r  $ is odd.

When $r$ is even,
\begin{align}\label{eq: limit of expectation of product of many
omega}
\lim\limits_{n\to\infty}\E\big[\omega_{p_1}(t_1)\cdots\omega_{p_r}(t_r)\big]
=&\lim\limits_{n\to\infty}\sum\limits_{\pi} \prod_{i=1}^{r/2}  \E\Big[   \omega_{p_{a(i)}}(t_{a(i)})\omega_{p_{b(i)}}(t_{b(i)})\Big]\nonumber\\
=&\sum\limits_{\pi} \prod_{i=1}^{r/2} \lim\limits_{n\to\infty}\E\Big[\omega_{p_{a(i)}}(t_{a(i)})\omega_{p_{b(i)}}(t_{b(i)})\Big] %\cdots[\lim\limits_{n\to\infty}\E(\omega_{p_{a(r/2)}}(t_{a(r/2)})\omega_{p_{b(r/2)}}(t_{b(r/2)}))]
\end{align}
where \[ \pi=\Big\{\{a(1),b(1)\},\ldots,\{a(r/2),b(r/2)\}\Big\} \]
runs over $\mathcal{P}_2(r)$.\\

\textbf{Step 4}\\

Suppose \[ M_n=(m_{ij}^{(n)})_{i,j=1}^r \quad\textrm{ and }\quad
M=(m_{ij})_{i,j=1}^r \]  are $r\times r$ matrices with entries
$m_{ij}^{(n)}=\E[\omega_{p_i}(t_i)\omega_{p_j}(t_j)]$ and
$m_{ij}=\lim\limits_{n\to\infty}\E(\omega_{p_i}(t_i)\omega_{p_j}(t_j))$.

Since each $M_n$ is a covariance matrix, their limit $M$ is also a
covariance matrix of a centered multivariate Gaussian variable
$(X_{p_1}(t_1),\ldots,X_{p_r}(t_r))$.
%Consider the multivariate Gaussian distribution $(X_1,\ldots,X_r)$ such
%that the mean of each $X_i$ is 0 and the covariance matrix is $M$.

From Wick's formula we have
\begin{align*}
\E[X_{p_1}(t_1)\cdots
X_{p_r}(t_r)]=\sum\limits_{\pi}\E[X_{p_{a(1)}}(t_{a(1)})X_{p_{b(1)}}(t_{b(1)})]\cdots\E[X_{p_{a(r/2)}}(t_{a(r/2)})X_{p_{a(r/2)}}(t_{b(r/2)})]
\end{align*}
where $\pi=\{\{a(1),b(1)\},\ldots,\{a(r/2),b(r/2)\}\}$ runs over
$\mathcal{P}_2(r)$. So
\begin{align*}
\lim\limits_{n\to\infty}\E[\omega_{p_1}(t_1)\cdots\omega_{p_r}(t_r)]=\E[X_{p_1}(t_1)\cdots
X_{p_r}(t_r)].
\end{align*}

In above equation $p_1,\ldots,p_r$ or $t_1,\ldots,t_r$ do not have
to be pairwise distinct. Thus by doing the same argument we can show
that all the joint moments of
$(\omega_{p_1}(t_1),\ldots,\omega_{p_r}(t_r))$ converges to the
corresponding joint moments of $(X_{p_1}(t_1),\ldots,X_{p_r}(t_r))$.
Therefore
\begin{align*}
(\omega_{p_1}(t_1),\ldots,\omega_{p_r}(t_r)) \text{ converges weakly
to } (X_{p_1}(t_1),\ldots,X_{p_r}(t_r)).
\end{align*}
%\textbf{Step 5}\\
%The covariance structure implies the consistence of the
%distributions $\{X_p(t)\}$.
Therefore there exists a family of centred Gaussian processes
$\{Z_p(t):t>0,p\ge2\}$ such that
$(Z_{p_1}(s_1),\ldots,Z_{p_k}(s_k))$ is distributed as
$(X_{p_1}(s_1),\ldots,X_{p_k}(s_k))$ for all $p_1,\ldots,p_k\in
\{2,3,\ldots\}$ and  $s_1<\ldots< s_k\in(0,1/b)$. Thus
%$\E[Z_{p_1}(s_1)]=\cdots=\E[Z_{p_k}(s_k)]=0$ and
\[
\lim\limits_{n\to\infty}\P(\omega_{p_1}(s_1)\le
a_1,\ldots,\omega_{p_k}(s_k)\le a_k)=\P(Z_{p_1}(s_1)\le
a_1,\ldots,Z_{p_k}(s_k)\le a_k)
\]
$\forall \{a_1,\ldots, a_k\}\subset\R$. Combining this with Lemma
\ref{lemma:covariance structure} and Proposition \ref{prop:
covariance for the first model}, we complete the proof.

\old { Moreover, we have the covariance structure of $\{Z_p(t)\}$:
for $0<t_1\le t_2$ and natural numbers $p>1$, $q>1$,
\begin{multline*}
\E[Z_p(t_1)Z_q(t_2)]%=\lim\limits_{n\to\infty}\E[\omega_p(t_1)\omega_q(t_2)]\\
=\sum_{\pi\in \mathcal{\tilde P}_{2 }(p,q)}\left(\tilde
f^{-}_{I}(\pi,t_1,t_2)+\tilde
f^{+}_{I}(\pi,t_1,t_2)\right)+(\kappa-1)\sum_{\pi\in
\mathcal{P}_{2,4}(p,q)}\left(\tilde f^{-}_{II}(\pi,t_1,t_2)+\tilde
f^{+}_{II}(\pi,t_1,t_2)\right).
\end{multline*}
From Proposition \ref{prop: covariance for the first model} we see
that when $b=0$, $\E[Z_p(t_1)Z_q(t_2)]$ is exactly as shown in
Theorem \ref{thm:multi time fluctuation}.

The proof of Theorem \ref{thm:multi time fluctuation} is completed.}
\end{proof}

\begin{remark}
We can also use classic Central Limit Theorem to prove that if $b=0$
then $Z_2(t)$ is a Brownian motion.
\end{remark}

\section{Fluctuation of eigenvalues for matrix with Brownian motion
entries}\label{sec: last section: fluctuation for Brownian entries}

The purpose of this section is to prove Theorem \ref{thm:
fluctuation for Brownian entries}. Suppose all assumptions of
\ref{thm: fluctuation for Brownian entries} hold.

\subsection{Asymptotic
commutativity for random Toeplitz matrices with slowly growing
bandwidth}\label{sec: asymptotic commutatuvity}

\begin{lemma}\label{lemma: trace formula for trA^ptr(A+B)^q}
Suppose $0<t_1\le t_2$ and $p$, $q$ are natural numbers no less than
2. Set $u_j=a_j(t_1)$, $v_j=a_j(t_2)-a_j(t_1)$. We have
\begin{align*}
&\E[\omega_p(t_1)\omega_q(t_2)]\\
=&b_n^{-\frac{p+q}{2}+1}\sum\limits_{r=0}^q{q\choose
r}\sum\limits_{j_1,\ldots,j_p,\atop j_1',\ldots,
j_q'}\Bigg(\Big(\E[u_{j_1}\cdots u_{j_p}u_{j_i'}\cdots
u_{j_r'}v_{j_{r+1}'}\cdots v_{j_q'}]-\E[u_{j_1}\cdots
u_{j_p}]\E[u_{j_i'}\cdots u_{j_r'}v_{j_{r+1}'}\cdots
v_{j_q'}]\Big)\\
&\delta(j_1+\cdots+j_p)\delta(j_1'+\cdots+j_q')\Bigg)+o(1).
\end{align*}
\end{lemma}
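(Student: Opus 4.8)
The plan is to use the independent-increments property of Brownian motion to split $B_n(t_2)$ into two independent band Toeplitz pieces and then to show that the order of the matrix factors is asymptotically irrelevant. Put $u_j=a_j(t_1)$ and $v_j=a_j(t_2)-a_j(t_1)$, so that $\{u_j\}_{j\ge1}$ and $\{v_j\}_{j\ge1}$ are independent families of centered Gaussians with variances $t_1$ and $t_2-t_1$, and set $U_n=B_n(t_1)$ (entries $u_{i-j}/\sqrt{b_n}$) and $V_n=\frac{1}{\sqrt{b_n}}(v_{i-j}\delta_{|i-j|\le b_n})_{i,j=1}^n$, so that $B_n(t_2)=U_n+V_n$ exactly. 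First I would expand $\text{tr}(B_n(t_2)^q)=\sum_{w\in\{U_n,V_n\}^q}\text{tr}(w)$, combine this with $\E[\omega_p(t_1)\omega_q(t_2)]=\frac{b_n}{n^2}\E[(\text{tr}B_n(t_1)^p-\E[\,\cdot\,])(\text{tr}B_n(t_2)^q-\E[\,\cdot\,])]$, and apply the trace formula of Lemma \ref{lemma:trace formula} to each trace. This writes the covariance as $b_n^{-\frac{p+q}{2}+1}$ times a sum over the $2^q$ words $w$ of
\[
\frac{1}{n^2}\sum_{i_1,i_2=1}^n\sum_{\mathbf{J},\mathbf{J'}}\Big(\E[a_{\mathbf{J}}a^{(w)}_{\mathbf{J'}}]-\E[a_{\mathbf{J}}]\E[a^{(w)}_{\mathbf{J'}}]\Big)I(i_1,\mathbf{J})I(i_2,\mathbf{J'})\,\delta(j_1+\cdots+j_p)\,\delta(j_1'+\cdots+j_q'),
\]
where $a_{\mathbf{J}}=u_{j_1}\cdots u_{j_p}$ and $a^{(w)}_{\mathbf{J'}}$ is the product whose $l$-th factor is $u_{j_l'}$ or $v_{j_l'}$ according to the $l$-th letter of $w$.

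Two facts then finish the argument. First, since $b_n=o(n)$, for every $\mathbf{J}$ and $\mathbf{J'}$ one has $\sum_{i_1}I(i_1,\mathbf{J})=n(1+O(b_n/n))$ and $\sum_{i_2}I(i_2,\mathbf{J'})=n(1+O(b_n/n))$, uniformly in $\mathbf{J},\mathbf{J'}$ and with a bound not depending on the order of the letters of $w$; hence the indicator double sum in the display equals $n^2(1+o(1))$ up to a uniform relative error $O(b_n/n)$. Second, for a word $w$ with exactly $r$ copies of $U_n$, relabelling the summation variables $\mathbf{J'}$ by the permutation that carries the $U$-letters of $w$ to the first $r$ positions leaves $\delta(j_1'+\cdots+j_q')$ invariant and turns $\sum_{\mathbf{J'}}\E[a_{\mathbf{J}}a^{(w)}_{\mathbf{J'}}]\,\delta(j_1'+\cdots+j_q')$ and $\sum_{\mathbf{J'}}\E[a^{(w)}_{\mathbf{J'}}]\,\delta(j_1'+\cdots+j_q')$ into the corresponding sums for the word $U_n^rV_n^{q-r}$, whose connected weight is exactly $\E[u_{j_1}\cdots u_{j_p}u_{j_1'}\cdots u_{j_r'}v_{j_{r+1}'}\cdots v_{j_q'}]-\E[u_{j_1}\cdots u_{j_p}]\E[u_{j_1'}\cdots u_{j_r'}v_{j_{r+1}'}\cdots v_{j_q'}]$. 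Replacing the indicator double sum by $n^2(1+o(1))$ (allowed by the first fact, which is insensitive to letter order) and then normalising each word via the second fact, every one of the $\binom{q}{r}$ words with $r$ copies of $U_n$ contributes, up to $o(1)$, the $r$-th summand in the statement; summing over $r$ gives the asserted identity.

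The step I expect to be delicate is making the $o(1)$ above genuinely negligible. Since the relative indicator error is only $O(b_n/n)$, it is absorbed only if the total mass $\sum_{\mathbf{J},\mathbf{J'}}|\E[a_{\mathbf{J}}a^{(w)}_{\mathbf{J'}}]-\E[a_{\mathbf{J}}]\E[a^{(w)}_{\mathbf{J'}}]|\,\delta(j_1+\cdots+j_p)\,\delta(j_1'+\cdots+j_q')$ is $O(b_n^{\frac{p+q}{2}-1})$, not merely $O(b_n^{\frac{p+q}{2}})$; I would obtain this by the same reduction-step count as in the proof of Lemma \ref{lemma:covariance structure}. There the connected weight is seen to vanish unless every absolute value occurring in $S_{\mathbf{J}}\cup S_{\mathbf{J'}}$ has multiplicity at least two and $\mathbf{J},\mathbf{J'}$ are correlated, and then the reduction $\mathbf{L}=\mathbf{J}\vee\mathbf{J'}$ (a single balanced vector of length $p+q-2$) together with the bounded number of its pre-images gives at most $O(b_n^{\frac{p+q-2}{2}})$ admissible pairs, while uniform boundedness of the Gaussian moments keeps each summand bounded. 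With this count, each of the finitely many ($2^q$) words contributes an error $O(b_n/n)=o(1)$, so the total error is $o(1)$. The same fact $\frac{1}{n^2}\sum_{i_1,i_2}I(i_1,\mathbf{J})I(i_2,\mathbf{J'})=1+o(1)$ is also what turns the a priori prefactor $\frac{b_n}{n^2}b_n^{-p/2}b_n^{-q/2}\sum_{i_1,i_2}$ coming from the trace formula into $b_n^{-\frac{p+q}{2}+1}$.
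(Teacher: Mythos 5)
Your proposal is correct and follows essentially the same route as the paper: decompose $B_n(t_2)=U_n+V_n$ using independent increments, expand $\mathrm{tr}(U_n+V_n)^q$ into the $2^q$ monomials, observe that the indicator factors are $1+O(b_n/n)$ uniformly (so that they may be dropped and the monomials reordered), and control the resulting error by the same reduction-step count from the proof of Lemma \ref{lemma:covariance structure}, which shows the number of pairs $(\mathbf{J},\mathbf{J'})$ with nonvanishing connected weight is $O(b_n^{\frac{p+q}{2}-1})$. The delicate point you flag is exactly the one the paper's proof addresses, and you resolve it the same way.
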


\begin{remark}
Lemma \ref{lemma: trace formula for trA^ptr(A+B)^q} shows that when
evaluating $\E[\omega_p(t_1)\omega_q(t_2)]$ we can treat
\begin{align*}
\frac{1}{\sqrt{b_n}}\Big(a_{i-j}(t_1)\delta_{|i-j|\le
b_n}\Big)_{i,j=1}^n\quad\text{and}\quad
\frac{1}{\sqrt{b_n}}\Big((a_{i-j}(t_2)-a_{i-j}(t_1))\delta_{|i-j|\le
b_n}\Big)_{i,j=1}^n
\end{align*}
as commutative matrices.
\end{remark}

\begin{proof} Obviously $u_{i_1}$ and $v_{i_2}$ are independent for arbitrary $i_1$ and $i_2$. Set
\begin{align*}
U=B_n(t_1)=\frac{1}{\sqrt{b_n}}\Big(u_{i-j}\delta_{|i-j|\le
b_n}\Big)_{i,j=1}^n\quad\text{and}\quad
V=B_n(t_2)-B_n(t_1)=\frac{1}{\sqrt{b_n}}\Big(v_{i-j}\delta_{|i-j|\le
b_n}\Big)_{i,j=1}^n.
\end{align*}
By trace formula (Lemma \ref{lemma:trace formula}),
\begin{align*}
&\E[\omega_p(t_1)\omega_q(t_2)]\\
=&\frac{b_n}{n^2}\Big(\E[\text{tr}B_n(t_1)^p\text{tr}B_n(t_2)^q]-\E[\text{tr}B_n(t_1)^p]\E[\text{tr}B_n(t_2)^q]\Big)\\
=&\frac{b_n}{n^2}\Big(\E[\text{tr}U^p\text{tr}(U+V)^q]-\E[\text{tr}U^p]\E[\text{tr}(U+V)^q]\Big)\\
=&b_n^{-\frac{p+q}{2}+1} \sum\limits_{j_1,\ldots,j_p,\atop
j_1',\ldots, j_q'}\Big(\E[u_{j_1}\cdots
u_{j_p}(u_{j_1'}+v_{j_1'})\cdots
(u_{j_q'}+v_{j_q'})]-\E[u_{j_1}\cdots
u_{j_p}]\E[(u_{j_1'}+v_{j_1'})\cdots
(u_{j_q'}+v_{j_q'})]\Big)\\
\times&\delta(j_1+\cdots+j_p)\delta(j_1'+\cdots+j_q')\Big(\frac{1}{n}\sum\limits_{i=1}^n\prod_{l=1}^pI_{[1,n]}(i+\sum\limits_{k=1}^lj_k)\Big)\Big(\frac{1}{n}\sum\limits_{i'=1}^n\prod_{l'=1}^qI_{[1,n]}(i'+\sum\limits_{k=1}^{l'}j_k')\Big)
\end{align*}
where $j_1,\ldots,j_p,j_1',\ldots, j_q'$ all run over $[-b_n,b_n]$.
By writing the $(u_{j_1'}+v_{j_1'})\cdots (u_{j_q'}+v_{j_q'})$ in
the last equation as a sum of monomials we see it suffices to prove
that
\begin{align}\label{eq:eq_of_asymptotic_commutativity}
&b_n^{-\frac{p+q}{2}+1} \sum\limits_{j_1,\ldots,j_p,\atop
j_1',\ldots, j_q'}\Big(\E[u_{j_1}\cdots u_{j_p}u_{j_1'}\cdots
u_{j_r'}v_{j_{r+1}'}\cdots v_{j_q'}]-\E[u_{j_1}\cdots
u_{j_p}]\E[u_{j_1'}\cdots u_{j_r'}v_{j_{r+1}'}\cdots
v_{j_q'}]\Big)\nonumber\\
\times&\delta(j_1+\cdots+j_p)\delta(j_1'+\cdots+j_q')\Big(\frac{1}{n}\sum\limits_{i=1}^n\prod_{l=1}^pI_{[1,n]}(i+\sum\limits_{k=1}^lj_k)\Big)\Big(\frac{1}{n}\sum\limits_{i'=1}^n\prod_{l'=1}^qI_{[1,n]}(i'+\sum\limits_{k=1}^{l'}j_{\pi(k)}')\Big)\nonumber\\
=&b_n^{-\frac{p+q}{2}+1} \sum\limits_{j_1,\ldots,j_p,\atop
j_1',\ldots, j_q'}\Big(\E[u_{j_1}\cdots u_{j_p}u_{j_1'}\cdots
u_{j_r'}v_{j_{r+1}'}\cdots v_{j_q'}]-\E[u_{j_1}\cdots
u_{j_p}]\E[u_{j_1'}\cdots u_{j_r'}v_{j_{r+1}'}\cdots
v_{j_q'}]\Big)\nonumber\\
\times&\delta(j_1+\cdots+j_p)\delta(j_1'+\cdots+j_q')+o(1)
\end{align}
where $0\le r\le q$ and $\pi$ is a permutation in $S_q$.

Set $\mathbf{J}=(j_1,\ldots,j_p)$, $\mathbf{J'}=(j_1',\ldots,j_q')$.
The $reduction$ $\mathbf{L}$ (see the proof of Lemma
\ref{lemma:covariance structure}) of $\mathbf{J}$ and $\mathbf{J'}$
is a $p+q-2$-dimensional vector with components in
$\{-b_n,\ldots,b_n\}$. With the same argument as that in the proof
of Lemma \ref{lemma:covariance structure} we get the similar
conclusion as Claim 1 and Claim 2:
\begin{enumerate}
\item[(a)] For each $\mathbf{L}\in\mathbb{R}^{p+q-2}$, there are at most
$2pq$ pairs $(\mathbf{J},\mathbf{J'})$ whose reduction is
$\mathbf{L}$.
\item[(b)] The main contribution to the left hand side of
\eqref{eq:eq_of_asymptotic_commutativity} comes from the case that
each component of $\mathbf{L}$ has the same absolute value as
exactly one other component of the opposite sign.
\end{enumerate}
By (a) and (b), the main contribution to the left hand side and the
first term of the right hand side of
\eqref{eq:eq_of_asymptotic_commutativity} comes from
$(\mathbf{J},\mathbf{J'})\in A_n$ where $A_n$ is a subset of
$\{-b_n,\ldots,b_n\}^p\times\{-b_n,\ldots,b_n\}^q$ and the
cardinality of $A_n$ is $O(b_n^{\frac{p+q}{2}-1})$. Because of the
uniform boundedness of the moments of $\{u_i\}$ and $\{v_i\}$ with
order no more than $p+q$, the difference between the left hand side
and the first term of the right hand side of
\eqref{eq:eq_of_asymptotic_commutativity} is no more than
\begin{align}\label{difference}
C\cdot b_n^{-\frac{p+q}{2}+1}
\sum\limits_{(\mathbf{J},\mathbf{J'})\in A_n}
\Bigg|\Big(\frac{1}{n}\sum\limits_{i=1}^n\prod_{l=1}^pI_{[1,n]}(i+\sum\limits_{k=1}^lj_k)\Big)\Big(\frac{1}{n}\sum\limits_{i'=1}^n\prod_{l'=1}^qI_{[1,n]}(i'+\sum\limits_{k=1}^{l'}j_{\pi(k)}')\Big)-1\Bigg|
\end{align}
for some constant $C>0$. For $(\mathbf{J},\mathbf{J'})\in A_n$, the
components of $\mathbf{J}$ and $\mathbf{J'}$ are all in
$\{-b_n\ldots,b_n\}$, so
\begin{align*}
\Big|\frac{1}{n}\sum\limits_{i=1}^n\prod_{l=1}^pI_{[1,n]}(i+\sum\limits_{k=1}^lj_k)-1\Big|\le\frac{2pb_n}{n}&&\text{and}&&\Big|\frac{1}{n}\sum\limits_{i'=1}^n\prod_{l'=1}^qI_{[1,n]}(i'+\sum\limits_{k=1}^{l'}j_{\pi(k)}')-1\Big|\le\frac{2qb_n}{n}.
\end{align*}
Therefore \eqref{difference} is $O(\frac{b_n}{n})$ which is $o(1)$.
The proof is completed.
\end{proof}

%%%%%%%%%%%%%%%%%%%%%%%%%%%%%%%%%%%%%%%%%%%%%
%%%%%%%%%%%%%%%%%%%%%%%%%%%%%%%%%%%%%%%%%%%%%
%%%%%%%%%%%%%%%%%%%%%%%%%%%%%%%%%%%%%%%%%%%%%
%%%%%%%%%%%%%%%%%%%%%%%%%%%%%%%%%%%%%%%%%%%%%
%%%%%%%%%%%%%%%%%%%%%%%%%%%%%%%%%%%%%%%%%%%%%
%%%%%%%%%%%%%%%%%%%%%%%%%%%%%%%%%%%%%%%%%%%%%
%%%%%%%%%%%%%%%%%%%%%%%%%%%%%%%%%%%%%%%%%%%%%
%%%%%%%%%%%%%%%%%%%%%%%%%%%%%%%%%%%%%%%%%%%%%

%Similarly as in section \ref{sec: fluctuation} we first compute the covariance of $\omega_p(t)$. Notice that now or $\omega_p(t)$ is defined for matrix with Brownian entries, but not for matrix with bandwidth proportional to $t$.

\subsection{Covariance structure of $ \omega_p(t) $}\label{sec: covariance for Brownina
entries} In this section we will use the functions $\tilde
f_I^\pm(\pi,t_1,t_2)$ and
 $\tilde f_{II}^\pm(\pi,t_1,t_2)$  introduced in Appendix \ref{appendix}.

\begin{proposition}\label{prop: covariance for brownian entries}
If $0<t_1\le t_2$ and $p$, $q$ are natural numbers no less than 2,
then
\begin{align*}
\lim\limits_{n\to\infty}\E[\omega_p(t_1)\omega_q(t_2)]=&\begin{cases}\sum\limits_{r=2,4,\ldots,q}{q\choose
r}t_1^{\frac{p+r}{2}}R_3(p,r)(q-r-1)!!(t_2-t_1)^{\frac{q-r}{2}}2^{\frac{p+q}{2}}&\text{if
$p$ and $q$ are both even}\\\sum\limits_{r=3,5,\ldots,q}{q\choose
r}t_1^{\frac{p+r}{2}}R_4(p,r)(q-r-1)!!(t_2-t_1)^{\frac{q-r}{2}}2^{\frac{p+q}{2}}&\text{if
$p$ and $q$ are both odd}\\0&\text{otherwise}\end{cases}.
\end{align*}
\end{proposition}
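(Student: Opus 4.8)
The plan is to combine Lemma~\ref{lemma: trace formula for trA^ptr(A+B)^q} with Wick's formula. By that lemma it suffices to identify the $n\to\infty$ limit of the sum displayed there; writing $u_j=a_j(t_1)$ and $v_j=a_j(t_2)-a_j(t_1)$, the families $\{u_j\}$ and $\{v_j\}$ are independent centred Gaussians with $\E[u_au_b]=t_1\mathbf{1}_{|a|=|b|}$, $\E[v_av_b]=(t_2-t_1)\mathbf{1}_{|a|=|b|}$, $\E[u_av_b]=0$, and $u_0=v_0=0$. For a fixed $r$ the relevant quantity is $\E[u_{j_1}\cdots u_{j_p}u_{j_1'}\cdots u_{j_r'}v_{j_{r+1}'}\cdots v_{j_q'}]-\E[u_{j_1}\cdots u_{j_p}]\,\E[u_{j_1'}\cdots u_{j_r'}v_{j_{r+1}'}\cdots v_{j_q'}]$. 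Expanding the first term by Wick's formula writes it as a sum over pair partitions of the $p+q$ index slots whose blocks are either $u$--$u$ pairs (both slots among the first $p+r$) or $v$--$v$ pairs (both slots among the last $q-r$), every block requiring matching absolute values, a $u$--$u$ block contributing a factor $t_1$ and a $v$--$v$ block a factor $t_2-t_1$. Subtracting the factorised term cancels exactly the pairings with no block joining $\mathbf{J}$ to $\mathbf{J'}$; since every $v$--$v$ block is internal to $\mathbf{J'}$, the surviving pairings are precisely those having at least one $u$--$u$ cross between $\mathbf{J}$ and the $u$-part of $\mathbf{J'}$.

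Next I would carry out the leading-order analysis exactly as in the proof of Lemma~\ref{lemma:covariance structure}: forming the reduction $\mathbf{L}$ of $\mathbf{J}$ and $\mathbf{J'}$, using the bound ``at most $2pq$ pre-images'' (Claim~1 there), Lemma~\ref{lemma:cluster estimate}, and the hypothesis $b_n=o(n)$ (which forces every indicator factor $I_{[1,n]}(\cdot)\to1$, as already exploited in Lemma~\ref{lemma: trace formula for trA^ptr(A+B)^q}), one sees that the only configurations surviving in the limit are those in which each absolute value occurs exactly twice and, within each block, the two signed components are opposite. A pairing with exactly one $u$--$u$ cross then forces the common value of that cross to vanish (since $\mathbf{J},\mathbf{J'}$ are balanced), so the term is $0$ because $u_0=v_0=0$; hence only pairings with at least two $u$--$u$ crosses contribute. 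Since for any pairing of the $p+r$ $u$-slots the number of crosses has the parity of $p$, when $p$ is even ``at least one cross'' already means ``at least two''. In particular $\lim_{n}\E[\omega_p(t_1)\omega_q(t_2)]=0$ unless such a pairing exists, which fails exactly when $p$ and $q$ have different parities (then $p+q$ is odd and the Wick expansion is empty for every $r$), giving the ``otherwise'' case.

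It remains to count, case by case and for each admissible $r$ (even when $p,q$ are even, odd when they are odd). The $v$-part admits $(q-r-1)!!$ pair partitions of its $q-r$ slots. For the $u$-part I need the number of pair partitions of its $p+r$ slots with at least two crosses between the $\mathbf{J}$-slots and the $u$-part of $\mathbf{J'}$: when $p$ is even this is $(p+r-1)!!-(p-1)!!(r-1)!!=R_3(p,r)$ (total minus the crossless ones); when $p$ is odd there is no crossless pairing but exactly $p\,r\,(p-2)!!(r-2)!!$ with a single cross, so it is $(p+r-1)!!-p\,r\,(p-2)!!(r-2)!!=R_4(p,r)$. Each admissible pair partition carries the Gaussian weight $t_1^{(p+r)/2}(t_2-t_1)^{(q-r)/2}$, and — by the same Riemann-sum computation as in the proof of Proposition~\ref{prop: covariance for the first model} and the evaluation of the $\tilde f_I^{\pm}$ in Appendix~\ref{appendix} — the number of signed components consistent with $j_1+\cdots+j_p=0=j_1'+\cdots+j_q'$ is asymptotic to $2^{(p+q)/2}\,b_n^{(p+q)/2-1}$, the exponent $(p+q)/2-1$ being the degrees of freedom left by the single effective balance constraint and the extra factor of $2$ (relative to $2^{(p+q)/2-1}$) the familiar ``$\tilde f^{+}+\tilde f^{-}$'' doubling from the undetermined sign of the first coincident components. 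Multiplying by $\binom{q}{r}$ and the prefactor $b_n^{-\frac{p+q}{2}+1}$ and summing over $r$ yields the stated formula. The main obstacle is this last step: pinning down the constant $2^{(p+q)/2}$ and the degree-of-freedom exponent uniformly over all admissible pair partitions and all $r$ — that is, transporting the $\tilde f_I^{\pm}$-type Riemann-sum evaluation of \cite{lsw} to the present setting, where the $t_i$-dependence sits in the covariances $t_1$ and $t_2-t_1$ rather than in the summation ranges — while checking that the one-cross cancellation and the $o(1)$ estimates from Lemma~\ref{lemma:cluster estimate} are invoked correctly for every $r$.
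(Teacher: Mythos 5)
Your proposal is correct and follows the same overall architecture as the paper's proof: Lemma~\ref{lemma: trace formula for trA^ptr(A+B)^q}, then the reduction/degrees-of-freedom analysis from Lemma~\ref{lemma:covariance structure} to isolate the leading configurations, then an enumeration of pair partitions. The one genuine difference is in the enumeration step. Because the entries here are Gaussian, you apply Wick's formula directly and read off $R_3(p,r)$ (resp.\ $R_4(p,r)$) as the number of pair partitions of the $p+r$ $u$-slots with at least two crosses; the exactly-two-cross pairings (whose two blocks are forced by balance to share an absolute value) enter on the same footing as the others, since $\E[u_i^4]=3t_1^2$ is exactly what Wick's sum over the three pairings of a four-element coincidence produces. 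The paper instead recycles the general-entry formula \eqref{eq: covariance for cite later}, which splits the count as $|\mathcal{\tilde P}_2(p,r)|+(\kappa-1)|\mathcal{P}_{2,4}(p,r)|$ with $\kappa=3$, and only afterwards verifies via \eqref{eq: enumeration 1}--\eqref{eq: enumeration 3} that this collapses to $R_3$ and $R_4$; it also evaluates the $v$-factor by citing the limiting spectral distribution of band Toeplitz matrices from \cite{lw} rather than by a direct $(q-r-1)!!$ Wick count. Your route is cleaner for this Gaussian model and makes the identities for $R_3,R_4$ transparent, at the cost of not generalizing to non-Gaussian increments (cf.\ the remark following Theorem~\ref{thm: fluctuation for Brownian entries}); your identification of the per-pairing weight $2^{(p+q)/2}b_n^{(p+q)/2-1}$ and of the single effective balance constraint agrees with what the paper obtains from $\tilde f_I^{\pm}(\pi,1,1)=2^{\frac{p+r}{2}-1}$ and \eqref{q term}.
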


\begin{proof}[Proof of Proposition \ref{prop: covariance for brownian entries}]

Set $u_j=a_j(t_1)$, $v_j=a_j(t_2)-a_j(t_1)$ and
\begin{align*}
U=A_n(t_1)=\frac{1}{\sqrt{b_n}}\Big(u_{i-j}\delta_{|i-j|\le
b_n}\Big)_{i,j=1}^n\quad\text{and}\quad
V=A_n(t_2)-A_n(t_1)=\frac{1}{\sqrt{b_n}}\Big(v_{i-j}\delta_{|i-j|\le
b_n}\Big)_{i,j=1}^n.
\end{align*}
Then $U$, $V$ are independent with $u_0=v_0=0$. For each $j\ne0$,
$u_j$ is a centered random variable with variance $t_1$ and $v_j$ is
a centered random variable with variance $t_2-t_1$.

From Lemma \ref{lemma: trace formula for trA^ptr(A+B)^q},
\begin{align*}
&\E[\omega_p(t_1)\omega_q(t_2)]\\
=&b_n^{-\frac{p+q}{2}+1}\sum\limits_{r=0}^q{q\choose
r}\sum\limits_{j_1,\ldots,j_p,\atop j_1',\ldots,
j_q'}\Bigg(\Big(\E[u_{j_1}\cdots u_{j_p}u_{j_i'}\cdots
u_{j_r'}v_{j_{r+1}'}\cdots v_{j_q'}]-\E[u_{j_1}\cdots
u_{j_p}]\E[u_{j_i'}\cdots u_{j_r'}v_{j_{r+1}'}\cdots
v_{j_q'}]\Big)\\
&\delta(j_1+\cdots+j_p)\delta(j_1'+\cdots+j_q')\Bigg)+o(1)
\end{align*}
where $j_1,\ldots,j_p,j_1',\ldots, j_q'$ all run over $[-b_n,b_n]$.

Set $\mathbf{J}=(j_1,\ldots,j_p)$, $\mathbf{J'}=(j_1',\ldots,j_q')$,
$\mathbf{J_1'}=(j_1',\ldots,j_r')$,
$\mathbf{J_2'}=(j_{r+1}',\ldots,j_q')$,
$u_{\mathbf{J}}=u_{j_1}\cdots u_{j_p}$,
$u_{\mathbf{J_1'}}=u_{j'_1}\cdots u_{j'_r}$ and
$v_{\mathbf{J_2'}}=v_{j'_{r+1}}\cdots v_{j'_q}$. Obviously
$\mathbf{J_1'}$ and $\mathbf{J_2'}$ are determined by $\mathbf{J'}$.
Then
\begin{align*}
\E[\omega_p(t_1)\omega_q(t_2)]&=b_n^{-\frac{p+q}{2}+1}\sum\limits_{r=0}^q{q\choose
r}\sum\limits_{\mathbf{J},\mathbf{J'}}\Bigg(\Big(\E[u_{\mathbf{J}}u_{\mathbf{J_1'}}v_{\mathbf{J_2'}}]-\E[u_{\mathbf{J}}]\E[u_{\mathbf{J_1'}}v_{\mathbf{J_2'}}]\Big)\Bigg)+o(1)\\
&=b_n^{-\frac{p+q}{2}+1}\sum\limits_{r=0}^q{q\choose
r}\sum\limits_{\mathbf{J},\mathbf{J'}}\Bigg(\Big(\E[u_{\mathbf{J}}u_{\mathbf{J_1'}}]-\E[u_{\mathbf{J}}]\E[u_{\mathbf{J_1'}}]\Big)\E[v_{\mathbf{J_2'}}]\Bigg)+o(1)
\end{align*}
where $\mathbf{J}$ runs over
$\{\mathbf{J}\in\{\pm1,\ldots,\pm{b_n}\}^p|j_1+\cdots+j_p=0\}$ and
$\mathbf{J'}$ runs over
$\{\mathbf{J'}\in\{\pm1,\ldots,\pm{b_n}\}^q|j'_1+\cdots+j'_q=0\}$.
The components of $\mathbf{J}$ and $\mathbf{J'}$ do not take the
value of 0 since $u_0=v_0=0$.

Now for fixed $r\in\{0,\ldots,q\}$ consider
\begin{align}\label{eq: r fixed}
b_n^{-\frac{p+q}{2}+1}\sum\limits_{\mathbf{J},\mathbf{J'}}\Bigg(\Big(\E[u_{\mathbf{J}}u_{\mathbf{J_1'}}]-\E[u_{\mathbf{J}}]\E[u_{\mathbf{J_1'}}]\Big)\E[v_{\mathbf{J_2'}}]\Bigg).
\end{align}

From the uniform bounds of the moments of the random variables,
there is a positive constant $C$ independent of $n$ such that
\[
\Big|\Big(\E[u_{\mathbf{J}}u_{\mathbf{J_1'}}]-\E[u_{\mathbf{J}}]\E[u_{\mathbf{J_1'}}]\Big)\E[v_{\mathbf{J_2'}}]\Big|<C
\]
for all $\mathbf{J}$ and $\mathbf{J'}$.

Recall that for a vector $\alpha=(\alpha_1,\ldots,\alpha_k)$, we set
$S_\alpha=\{|\alpha_1|,\ldots,|\alpha_k|\}$. From the independence
of the random variables $\{u_i\}$ and $\{v_i\}$, a term
$$\Big(\E[u_{\mathbf{J}}u_{\mathbf{J_1'}}]-\E[u_{\mathbf{J}}]\E[u_{\mathbf{J_1'}}]\Big)\E[v_{\mathbf{J_2'}}]$$
in \eqref{eq: r fixed} is not zero only when the following
constraints are satisfied:
\begin{description}
\item[Constraint 1:] $\mathbf{J}$ and $\mathbf{J_1'}$ are correlated, i.e., $S_{\mathbf{J}}$ and $S_{\mathbf{J_1'}}$ have common elements;
\item[Constraint 2:] each element in $S_{\mathbf{J}}\cup S_{\mathbf{J_1'}}$ has
multiplicity 2 or more;
\item[Constraint 3:] each element in $S_{\mathbf{J_2'}}$ has
multiplicity 2 or more.
\end{description}

Similarly as in the proof of Lemma \ref{lemma:covariance structure},
for each pair $(\mathbf{J},\mathbf{J'})$ corresponding to a non-zero
term we do reduction and get a $p+q-2$ dimensional balanced vector
$\mathbf{L}=\mathbf{J}{\bigvee}_{|j_u|}\mathbf{J'}$. From the
construction of $\mathbf{L}$ we notice that $\mathbf{J_2'}$ consists
of the last $q-r$ components of $\mathbf{L}$ since $\mathbf{J}$ and
$\mathbf{J_1'}$ are correlated. Different from the case in the proof
of Lemma \ref{lemma:covariance structure}, now the image of
reduction is not the whole set of
$\{\mathbf{L}\in\{\pm1,\ldots,\pm{b_n}\}^{p+q-2}|l_1+\cdots+l_{p+q-2}=0\}$
because of the above three constraints. Use $\Sigma$ to denote the
image of reductions of the pairs of balanced vectors
$(\mathbf{J},\mathbf{J'})$ satisfying the three constraints. From
the Claim 1 in the proof of Lemma \ref{lemma:covariance structure},
the pre-images of a given $\mathbf{L}\in\Sigma$ is no more than
$2pq$.

By exactly the same argument we used to get the Claim 2 in the proof
of Lemma \ref{lemma:covariance structure}, we get the following
observation.\\

\textbf{Observation} The main contribution to \eqref{eq: r fixed}
comes from the case that $p+q$ is even and each of $\mathbf{L}$'s
components has the same absolute value as exactly one other
component of the opposite sign.\\

 From the Constraint 3, if $j_s'$ is a component of
$\mathbf{J_2'}$, then there must be some other components
$j_{w_1}',\ldots,j_{w_\alpha}'$ of $\mathbf{J_2'}$ such that
$|j_s'|=|j_{w_1}'|=\cdots=|j_{w_\alpha}'|$. But all of
$j_s',j_{w_1}',\ldots,j_{w_\alpha}'$ appear in $\mathbf{L}$ since
$\mathbf{J_2'}$ consists of the last $q-r$ components of
$\mathbf{L}$. So from the above observation $j_s'$ has the same
absolute value as exactly one other component of $\mathbf{J_2'}$
which has an opposite sign as $j_s'$. In other words, the main
contribution to \eqref{eq: r fixed} comes from the case that $q-r$
is even and each of $\mathbf{J_2'}$'s components has the same
absolute value as exactly one other component of the opposite sign.
In this case $\mathbf{J_1'}$ and $\mathbf{J_2'}$ are both balanced.

So \eqref{eq: r fixed} equals
\begin{align}\label{eq: decompose of (24)}
\Bigg(b_n^{-\frac{p+r}{2}+1}\sum\limits_{\mathbf{J},\mathbf{J_1'}}\Big(\E[u_{\mathbf{J}}u_{\mathbf{J_1'}}]-\E[u_{\mathbf{J}}]\E[u_{\mathbf{J_1'}}]\Big)\Bigg)\Bigg(b_n^{-\frac{q-r}{2}}\sum\limits_{\mathbf{J_2'}}\E[v_{\mathbf{J_2'}}]\Bigg)+o(1)
\end{align}
where $\mathbf{J}$ runs over
$\{\mathbf{J}\in\{\pm1,\ldots,\pm{b_n}\}^p|\mathbf{J}\text{ is
balanced}\}$, $\mathbf{J_1'}$ runs over
$\{\mathbf{J_1'}\in\{\pm1,\ldots,\pm{b_n}\}^r|\mathbf{J_1'}\text{ is
balanced}\}$ and $\mathbf{J_2'}$ runs over
$\{\mathbf{J_2'}\in\{\pm1,\ldots,\pm{b_n}\}^{q-r}|\mathbf{J_2'}\text{
is balanced}\}$.

We can use exactly the same argument as in the proof of Lemma
\ref{lemma:covariance structure} to evaluate
$$\lim\limits_{n\to\infty}b_n^{-\frac{p+r}{2}+1}\sum\limits_{\mathbf{J},\mathbf{J_1'}}\Big(\E[u_{\mathbf{J}}u_{\mathbf{J_1'}}]-\E[u_{\mathbf{J}}]\E[u_{\mathbf{J_1'}}]\Big).$$
In fact, this limit is 0 when $p+r$ is odd. When $p+r$ is even, we
have the same formula as \eqref{eq: covariance for cite later}:
\begin{align*}
 &\lim\limits_{n\to\infty}b_n^{-\frac{p+r}{2}+1}\sum\limits_{\mathbf{J},\mathbf{J_1'}}\Big(\E[u_{\mathbf{J}}u_{\mathbf{J_1'}}]-\E[u_{\mathbf{J}}]\E[u_{\mathbf{J_1'}}]\Big)
\nonumber\\
=&(\E[u_i^2])^{\frac{p+r}{2}}\Big[\sum_{\pi\in \mathcal{\tilde P}_{2
}(p,r)}\left(\tilde f^{-}_{I}(\pi,1,1) +\tilde
f^{+}_{I}(\pi,1,1)\right)\Big]\nonumber\\
+&\Big((\E[u_i^2])^{\frac{p+r-4}{2}}\E[u_i^4]-(\E[u_i^2])^{\frac{p+r}{2}}\Big)\Big[\sum_{\pi\in
\mathcal{P}_{2,4}(p,r)}\left(\tilde f^{-}_{II}(\pi,1,1)+\tilde
f^{+}_{II}(\pi,1,1)\right)\Big].
\end{align*}

Recall that $\E[u_i^2]=t_1$ and $\E[u_i^4]=3t_1^2$ for all $i\ne0$
and that $$\tilde f^{-}_{I}(\pi,1,1)=\tilde
f^{+}_{I}(\pi,1,1)=\tilde f^{-}_{II}(\pi,1,1)+\tilde
f^{+}_{II}(\pi,1,1)=2^{\frac{p+r}{2}-1}$$ since $b_n=o(n)$ (see
Lemma \ref{lemma: integrals for b=0}). Therefore
\begin{align}\label{p,r term}
&\lim\limits_{n\to\infty}b_n^{-\frac{p+r}{2}+1}\sum\limits_{\mathbf{J},\mathbf{J_1'}}\Big(\E[u_{\mathbf{J}}u_{\mathbf{J_1'}}]-\E[u_{\mathbf{J}}]\E[u_{\mathbf{J_1'}}]\Big)\nonumber\\
=&\begin{cases}2^{\frac{p+r}{2}}t_1^{\frac{p+r}{2}}\Big(|\mathcal{\tilde
P}_{2 }(p,r)|+2|\mathcal{P}_{2,4}(p,r)|\Big)&\text{if $p+r$ is even}\\
0&\text{if $p+r$ is odd}\end{cases}
\end{align}
where $|\mathcal{\tilde P}_{2 }(p,r)|$ and
$|\mathcal{P}_{2,4}(p,r)|$ denote the cardinalities of
$\mathcal{\tilde P}_{2 }(p,r)$ and $\mathcal{P}_{2,4}(p,r)$
respectively.

To evaluate
$\lim\limits_{n\to\infty}b_n^{-\frac{q-r}{2}}\sum\limits_{\mathbf{J_2'}}\E[v_{\mathbf{J_2'}}]$,
consider a random Toeplitz matrix
\begin{align*}
M_n=\frac{1}{\sqrt{2b_n}}\Big(w_{i-j}\delta_{|i-j|\le
b_n}\Big)_{i,j=1}^n=\frac{1}{\sqrt{2b_n}}\Big(\frac{v_{i-j}}{\sqrt{t_2-t_1}}\,\delta_{|i-j|\le
b_n}\Big)_{i,j=1}^n
\end{align*}
where $w_i=\frac{v_{i-j}}{\sqrt{t_2-t_1}}$ are centered random
variables with variance 1. By Lemma \ref{lemma:trace formula},
\begin{align*}
\frac{1}{n}\E[\text{tr}M_n^{q-r}]=\frac{1}{n}\frac{1}{(2b_n)^{\frac{q-r}{2}}}\sum\limits_{i=1}^n\sum\limits_{j_1,\ldots,j_{q-r}=-b_n}^{b_n}\E[w_{j_1}\cdots
w_{j_{q-r}}]\prod_{s=1}^{q-r}I_{[1,n]}(i+\sum\limits_{z=1}^sj_z)\delta(\sum\limits_{z=1}^{q-r}j_z).
\end{align*}
Since $b_n=o(n)$, we have
$\frac{1}{n}\sum\limits_{i=1}^n\lim\limits_{n\to\infty}\prod_{s=1}^{q-r}I_{[1,n]}(i+\sum\limits_{z=1}^sj_z)=1$
for all $j_1,\ldots,j_{q-r}$. Thus from dominate convergence,
\begin{align*}
\frac{1}{n}\E[\text{tr}M_n^{q-r}]=&\frac{1}{(2b_n)^{\frac{q-r}{2}}}\sum\limits_{j_1,\ldots,j_{q-r}=-b_n}^{b_n}\E[w_{j_1}\cdots
w_{j_{q-r}}]\delta(\sum\limits_{z=1}^{q-r}j_z)+o(1)\\
=&\frac{1}{[2(t_2-t_1)]^{\frac{q-r}{2}}}\frac{1}{(b_n)^{\frac{q-r}{2}}}\sum\limits_{j_1,\ldots,j_{q-r}=-b_n}^{b_n}\E[v_{j_1}\cdots
v_{j_{q-r}}]\delta(\sum\limits_{z=1}^{q-r}j_z)+o(1)\\
=&\frac{1}{[2(t_2-t_1)]^{\frac{q-r}{2}}}\frac{1}{(b_n)^{\frac{q-r}{2}}}\sum\limits_{\mathbf{J_2'}}\E[v_{\mathbf{J_2'}}]+o(1)
\end{align*}
where $\mathbf{J_2'}$ runs over
$\{\mathbf{J_2'}\in\{\pm1,\ldots,\pm{b_n}\}^{q-r}|\mathbf{J_2'}\text{
is balanced}\}$. By Theorem 3.1 of \cite{lw} the empirical measure
of $M_n$ converges weakly to the standard Gaussian distribution. So
$\lim\limits_{n\to\infty}\frac{1}{n}\E[\text{tr}M_n^{q-r}]$ equals
$(q-r-1)!!$ if $q-r$ is even and $0$ if $q-r$ is odd. Thus
\begin{align}\label{q term}
\lim\limits_{n\to\infty}b_n^{-\frac{p+r}{2}}\sum\limits_{\mathbf{J_2'}}\E[v_{\mathbf{J_2'}}]=\begin{cases}0&\text{if
$q-r$ is
odd}\\(q-r-1)!!(t_2-t_1)^{\frac{q-r}{2}}2^{\frac{q-r}{2}}&\text{if
$q-r$ is even}\end{cases}.
\end{align}

From \eqref{eq: decompose of (24)}, \eqref{p,r term} and \eqref{q
term} we see that\eqref{eq: r fixed} is not $o(1)$ only when $p-r$
and $q-r$ are both even. Thus
\begin{align}\label{eq: omega}
&\lim\limits_{n\to\infty}\E[\omega_p(t_1)\omega_q(t_2)]\nonumber\\
=&\lim\limits_{n\to\infty}b_n^{-\frac{p+q}{2}+1}\sum\limits_{r=0}^q{q\choose
r}\sum\limits_{\mathbf{J},\mathbf{J'}}\Bigg(\Big(\E[u_{\mathbf{J}}u_{\mathbf{J_1'}}]-\E[u_{\mathbf{J}}]\E[u_{\mathbf{J_1'}}]\Big)\E[v_{\mathbf{J_2'}}]\Bigg)\nonumber\\
=&\begin{cases}\sum\limits_{r=2,4,\ldots,q}{q\choose
r}t_1^{\frac{p+r}{2}}2^{\frac{p+q}{2}}\Big(|\mathcal{\tilde P}_{2
}(p,r)|+2|\mathcal{P}_{2,4}(p,r)|\Big)(q-r-1)!!(t_2-t_1)^{\frac{q-r}{2}}&\text{if
$p$ and $q$ are both even}\\\sum\limits_{r=3,5,\ldots,q}{q\choose
r}t_1^{\frac{p+r}{2}}2^{\frac{p+q}{2}}|\mathcal{\tilde P}_{2
}(p,r)|(q-r-1)!!(t_2-t_1)^{\frac{q-r}{2}}&\text{if $p$ and $q$ are
both odd}\\0&\text{otherwise}\end{cases}
\end{align}
because $\mathcal{P}_{2,4}(p,r)=\emptyset$ when $p$ or $r$ is odd.
(We made the convention that $(-1)!!=1$.)

When $p$ and $r$ are even, as found in the proof of Proposition
\ref{prop: covariance for the first model},
\begin{align}\label{eq: enumeration 1}
\bigg|\mathcal{P}_{2,4}(p,r)\bigg|=\frac{p\,r}{4}(p-1)!!(r-1)!!.
\end{align}
Since $\mathcal{\tilde
P}_{2}(p,r)=\mathcal{P}_{2}(p+r)\backslash\bigg(\big\{\pi\in\mathcal{P}_{2}(p+r)|\pi\text{
has 0 cross}\big\}\cup\big\{\pi\in\mathcal{P}_{2}(p,r)|\pi\text{ has
2 crosses}\big\}\bigg)$,
\begin{align}\label{eq: enumeration 2}
|\mathcal{\tilde
P}_{2}(p,r)|=&|\mathcal{P}_{2}(p+r)|-|\mathcal{P}_{2}(p)||\mathcal{P}_{2}(r)|-|\{\pi\in\mathcal{P}_{2}(p,r)|\pi\text{ has 2 crosses}\}|\nonumber\\
=&(p+r-1)!!-(p-1)!!(r-1)!!-2{p\choose2}{r\choose2}(p-3)!!(r-3)!!\nonumber\\
=&(p+r-1)!!-(1+\frac{pr}{2})(p-1)!!(r-1)!!.
\end{align}

When $p$ and $r$ are odd, $\mathcal{\tilde
P}_{2}(p,r)=\mathcal{P}_{2}(p+r)\backslash\Big\{\pi\in\mathcal{P}_{2}(p+r)|\pi\text{
has 1 cross}\Big\}$, so
\begin{align}\label{eq: enumeration 3}
|\mathcal{\tilde
P}_{2}(p,r)|=|\mathcal{P}_{2}(p+r)|-|\{\pi\in\mathcal{P}_{2}(p,r)|\pi\text{
has 1 crosses}\}|=(p+r-1)!!-pr(p-2)!!(r-2)!!.
\end{align}
Plugging \eqref{eq: enumeration 1}, \eqref{eq: enumeration 2} and
\eqref{eq: enumeration 3} into \eqref{eq: omega} we finish the
proof.

\end{proof}

\subsection{Multi-point fluctuation }\label{multi-point fluctuation for Brownina entries}
We prove Theorem \ref{thm: fluctuation for Brownian entries} in this
Section.

\begin{proof}[Proof of Theorem \ref{thm: fluctuation for Brownian
entries}] The proof of Theorem \ref{thm: fluctuation for Brownian
entries} contains 4 steps. But only the first two steps are
different from the corresponding steps of the proof of Theorem
\ref{thm:multi time fluctuation}.

\textbf{Step 1}

By Lemma \ref{lemma:trace formula} we have that
\begin{align*}
&\E\big[\omega_{p_1}(t_1)\cdots\omega_{p_r}(t_r)\big]\\
=&\Big(\frac{\sqrt{b_n}}{n}\Big)^r\E\Big[\Big(\text{tr}A_n(t_1)^{p_1}-\E[\text{tr}A_n(t_1)^{p_1}]\Big)\cdots\Big(\text{tr}A_n(t_r)^{p_r}-\E[\text{tr}A_n(t_r)^{p_r}]\Big)\Big]\\
=&\Big(\frac{\sqrt{b_n}}{n}\Big)^r\Big(\frac{1}{\sqrt{b_n}}\Big)^{p_1+\cdots+p_r}\sum\limits_{i_1,\ldots,i_r=1}^n\sum\limits_{\mathbf{J_l},\ldots,\mathbf{J_r}}\E\big[(a_{\mathbf{J_l}}(t_1)-\E[a_{\mathbf{J_l}}(t_1)])\cdots(a_{\mathbf{J_r}}(t_r)-\E[a_{\mathbf{J_r}}(t_r)])\big]I(i_1,\mathbf{J_l})\cdots
I(i_r,\mathbf{J_r})
\end{align*}
where each $\mathbf{J_k}=(j_1(k),\ldots,j_{p_k}(k))$,
$a_{\mathbf{J_k}}(t_k)=a_{j_1(k)}(t_k)\cdots a_{j_{p_k}(k)}(t_k)$,
$I(i_k,\mathbf{J_k})=\prod\limits_{s=1}^{p_k}I_{[1,n]}(i_k+\sum\limits_{q=1}^sj_q(k))$
and $\mathbf{J_k}$ runs over

\begin{align*}
\Big\{(j_1(k),\ldots,j_{p_k}(k))\in\{\pm1,\ldots,\pm
b_n\}^{p_k}\Big|j_1(k)+\cdots+j_{p_k}(k)=0\Big\}.
\end{align*}

\textbf{Step 2}

We have the following the observations.
\begin{enumerate}
\item For fixed $\mathbf{J_l},\ldots,\mathbf{J_r}$, if there exists $1\le
i\le r$ such that $\mathbf{J_i}$ is not correlated with any other
one of $\mathbf{J_l},\ldots,\mathbf{J_r}$, then
$\E[(a_{\mathbf{J_l}}(t_1)-\E
[a_{\mathbf{J_l}}(t_1)])\cdots(a_{\mathbf{J_r}}(t_r)-\E
[a_{\mathbf{J_r}}(t_r)])]$ must be 0 because of the independence of
$a_1,a_2,\cdots$.

%\item For fixed $\mathbf{J_l},\ldots,\mathbf{J_r}$, if 0 is a component of
%any one of $\mathbf{J_l},\ldots,\mathbf{J_r}$, then
%$\E[(a_{\mathbf{J_l}}-\E a_{\mathbf{J_l}})\cdots(a_{\mathbf{J_r}}-\E
%a_{\mathbf{J_r}})]$
%must be 0.

\item When $\mathbf{J_l},\ldots,\mathbf{J_r}$ vary,
$\E[(a_{\mathbf{J_l}}(t_1)-\E[a_{\mathbf{J_l}}(t_1)])\cdots(a_{\mathbf{J_r}}(t_r)-\E[a_{\mathbf{J_r}}(t_r)])]$
is bounded because of the boundedness of the moments of
$a_1,a_2,\dotsm$. So there is some $M>0$ such that
$$|\E[(a_{\mathbf{J_l}}(t_1)-\E[a_{\mathbf{J_l}}(t_1)])\cdots(a_{\mathbf{J_r}}(t_r)-\E[a_{\mathbf{J_r}}(t_r)])]|\le M$$ holds uniformly.
\end{enumerate}

Then by applying the same argument in Step 3 and Step 4 of the proof
of Theorem \ref{thm:multi time fluctuation}, we finish the proof of
Theorem \ref{thm: fluctuation for Brownian entries}.

%%%%%%%%%%%%%%%%%%%%%%%%%%%%%%%%%%%%%%%%%%%%%%%%%%%%%%%%%%%
\end{proof}

 \begin{appendix}
   \section{Some integrals in the proofs Theorem \ref{thm:multi time fluctuation} and \ref{thm: fluctuation for Brownian
   entries}}\label{appendix}

To prove  Theorem \ref{thm:multi time fluctuation} and Theorem
\ref{thm: fluctuation for Brownian entries}, we need to define four
functions $\tilde f_I^+(\pi,t_1,t_2)$, $\tilde f_I^-(\pi,t_1,t_2)$,
$\tilde f_{II}^+(\pi,t_1,t_2)$ and $\tilde f_{II}^-(\pi,t_1,t_2)$.

Suppose $b\in[0,1]$ and $t_1\le t_2$ are numbers in $(0,1/b)$. (We
define $(0,1/0)$ to be $(0,\infty)$.)

Suppose $p+q$ is even. For
$\pi=\{V_1,\ldots,V_{(p+q)/2}\}\in\mathcal{P}_2(p,q)$, set
\begin{align*}
\epsilon_{\pi}(i)=\begin{cases}1&\text{if $i$ is the smallest number
of }\pi^{-1}(\pi(i))\\-1&\text{otherwise}\end{cases}
\end{align*}
where $\pi(i)$ is defined in Section \ref{sec:partition}.

We construct a relation between two groups of unknowns
$y_1,\ldots,y_{p+q}$ and $x_1,\ldots,x_{\frac{p+q}{2}}$ as
$$\epsilon_{\pi}(i)y_i=\epsilon_{\pi}(j)y_j=x_{\pi(i)}$$ whenever
$i\sim_{\pi}j$. Without loss of generality suppose
$V_i\cap\{1,\ldots,p\}\ne\emptyset$ if and only if $i\le s$. For
$x_0,y_0\in[0,1]$ and $x_1,\ldots,x_{\frac{p+q}{2}}\in[-1,1]$, we
define
\begin{align*}
\tilde f_I^-(\pi,t_1,t_2)=&\int_{[0,1]^2}dy_0dx_0\int_{[-t_1,t_1]^s}dx_1\cdots dx_s\int_{[-t_2,t_2]^{-s+(p+q)/2}}dx_{s+1}\cdots dx_{(p+q)/2}\\
\times&\delta(\sum\limits_{i=1}^py_i)\prod_{j=1}^pI_{[0,1]}(x_0+b\sum\limits_{i=1}^jy_i)\prod_{j'=p+1}^{p+q}I_{[0,1]}(y_0+b\sum\limits_{i=p+1}^{j'}y_i)
\end{align*}
and
\begin{align*}
\tilde f_I^+(\pi,t_1,t_2)=&\int_{[0,1]^2}dy_0dx_0\int_{[-t_1,t_1]^s}dx_1\cdots dx_s\int_{[-t_2,t_2]^{-s+(p+q)/2}}dx_{s+1}\cdots dx_{(p+q)/2}\\
\times&\delta(\sum\limits_{i=1}^py_i)\prod_{j=1}^pI_{[0,1]}(x_0+b\sum\limits_{i=1}^jy_i)\prod_{j'=p+1}^{p+q}I_{[0,1]}(y_0-b\sum\limits_{i=p+1}^{j'}y_i)
\end{align*}
where $\delta$ is the Dirac function and $I_{[0,1]}$ is the
indicator function.

For
$\pi=\{V_1,\ldots,V_{\frac{p+q}{2}-1}\}\in\mathcal{P}_{2,4}(p,q)$
(denoting the block with four elements by $V_i$), we set for
$\pi(k)\ne i$
\begin{align*}
\tau_{\pi}(k)=\begin{cases}1& \text{if $k$ is the smallest number of
}\pi^{-1}(\pi(k));\\-1&\text{otherwise},\end{cases}
\end{align*}
while for $\pi(k)=i$
\begin{align*}
\tau_{\pi}(k)=\begin{cases}1& \text{if $k$ is the smallest or
largest number of
}\pi^{-1}(\pi(k));\\-1&\text{otherwise}.\end{cases}
\end{align*}
For such a $\pi$, we define a relation between two groups of
unknowns $y_1,\ldots,y_{p+q}$ and $x_1,\ldots,x_{\frac{p+q}{2}-1}$
as following: $$\tau_{\pi}(u)y_u=\tau_{\pi}(v)y_v=x_{\pi(u)}$$
whenever $u\sim_{\pi}v$.
%Without lost of generality suppose
%$V_k\cap\{1,\ldots,p\}\ne\emptyset$ if and only if $k\le s$. So $i\le
%s$ since $V_i$ has four elements and two of them are in
%$\{1,\ldots,p\}$.
For $x_0,y_0\in[0,1]$ and $x_1,\ldots,x_{\frac{p+q}{2}-1}\in[-1,1]$,
we define
\begin{align*}
\tilde f_{II}^-(\pi,t_1,t_2)=&\int_{[0,1]^2}dy_0dx_0\int_{[-t_1,t_1]^{p/2}}dx_1\cdots dx_{p/2}\int_{[-t_2,t_2]^{(q/2)-1}}dx_{(p/2)+1}\cdots dx_{-1+(p+q)/2}\\
\times&\prod_{j=1}^pI_{[0,1]}(x_0+b\sum\limits_{i=1}^jy_i)\prod_{j'=p+1}^{p+q}I_{[0,1]}(y_0+b\sum\limits_{i=p+1}^{j'}y_i)
\end{align*}
and
\begin{align*}
\tilde f_{II}^+(\pi,t_1,t_2)=&\int_{[0,1]^2}dy_0dx_0\int_{[-t_1,t_1]^{p/2}}dx_1\cdots dx_{p/2}\int_{[-t_2,t_2]^{(q/2)-1}}dx_{(p/2)+1}\cdots dx_{-1+(p+q)/2}\\
\times&\prod_{j=1}^pI_{[0,1]}(x_0+b\sum\limits_{i=1}^jy_i)\prod_{j'=p+1}^{p+q}I_{[0,1]}(y_0-b\sum\limits_{i=p+1}^{j'}y_i).
\end{align*}
\begin{remark}
$\tilde f_I^\pm(\pi,t_1,t_2)$ and $\tilde f_{II}^\pm(\pi,t_1,t_2)$
are same as the functions $f_I^\pm(\pi)$ and $f_{II}^\pm(\pi)$
defined in \cite{lsw} respectively,
only except that the domains of the integrals are different.\\
\end{remark}
Immediately we have that when $b=0$,
\begin{align}\label{eq: to compute integral of type I when b=0}
\tilde f_I^-(\pi,t_1,t_2)=\tilde
f_I^+(\pi,t_1,t_2)=\int_{[-t_1,t_1]^s}dx_1\cdots
dx_s\int_{[-t_2,t_2]^{-s+(p+q)/2}}dx_{s+1}\cdots
dx_{(p+q)/2}\cdot\delta(\sum\limits_{i=1}^py_i),
\end{align}

\begin{align}\label{eq: to compute integral of type II when b=0}
\tilde f_{II}^-(\pi,t_1,t_2)=\tilde
f_{II}^+(\pi,t_1,t_2)=\int_{[-t_1,t_1]^{p/2}}dx_1\cdots
dx_{p/2}\int_{[-t_2,t_2]^{(q/2)-1}}dx_{(p/2)+1}\cdots
dx_{-1+(p+q)/2}.
\end{align}

\begin{lemma}\label{lemma: integrals for b=0}
Suppose $p+q$ is even and $\pi_1\in\mathcal{P}_2(p,q)$. If $\pi_1$
has $k$ crosses, then when $b=0$,
\begin{align}\label{eq: integral of type I when b=0}
\tilde f_I^-(\pi_1,t_1,t_2)=\tilde
f_I^+(\pi_1,t_1,t_2)=2^{\frac{p+q}{2}-1}\,t_1^{\frac{p+k}{2}-1}\,t_2^{\frac{q-k}{2}}.
\end{align}
Suppose $p,q$ are both even and $\pi_2\in\mathcal{P}_{2,4}(p,q)$.
Then when $b=0$,
\begin{align}\label{eq: integral of type II when b=0}
\tilde f_{II}^-(\pi_2,t_1,t_2)=\tilde
f_{II}^+(\pi_2,t_1,t_2)=2^{\frac{p+q}{2}-1}\,t_1^{\frac{p}{2}}\,
t_2^{\frac{q}{2}-1}.
\end{align}
\end{lemma}

\begin{proof}[Proof of Lemma \ref{lemma: integrals for b=0}]
Since $\pi_1$ has $k$ crosses, it has $k$ blocks which intersect
both $\{1,\ldots,p\}$ and $\{p+1,\ldots,p+q\}$. Therefore $\pi_1$
has $\dfrac{p-k}{2}$ blocks totally contained in $\{1,\ldots,p\}$
and $\dfrac{q-k}{2}$ blocks totally contained in
$\{p+1,\ldots,p+q\}$. By definition of the number $s$ we have that
\[
s=\dfrac{p-k}{2}+k=\dfrac{p+k}{2}.
\]
The variables $x_{s+1},\ldots,x_{(p+q)/2}$ correspond to the blocks
totally contained in $\{p+1,\ldots,p+q\}$ thus can take values
freely in $[-t_2,t_2]$. So their contribution to the integral is
$$(2t_2)^{((p+q)/2)-s}=(2t_2)^{(q-k)/2}.$$ Among $x_1,\ldots,x_s$, there are
$\dfrac{p-k}{2}$ of them corresponding to the blocks totally
contained in $\{1,\ldots,p\}$ and they can take value freely in
$[-t_1,t_1]$. So their contribution to the integral is
$$(2t_1)^{(p-k)/2}.$$ The other $k$ variables of $x_1,\ldots,x_s$
correspond to the $k$ crosses. They can only take value in
$[-t_1,t_1]$ but not $[-t_2,t_2]$ since $|t_1|\le|t_2|$. The
restriction $\delta(\sum\limits_{i=1}^py_i)$ is equivalent to the
fact that the sum of these $k$ variables is $0$. Thus this
restriction will take off one degree of freedom of these $k$
variables and their contribution to the integral is
$$(2t_1)^{k-1}.$$ The total integral should be the product of
contribution of all variables, which is
\[
(2t_2)^{(q-k)/2}\cdot(2t_1)^{(p-k)/2}\cdot(2t_1)^{k-1}=2^{(\frac{p+q}{2}-1)}t_1^{(\frac{p+k}{2}-1)}t_2^{(\frac{q-k}{2})}.
\]
Thus we proved (\ref{eq: integral of type I when b=0}). (\ref{eq:
integral of type II when b=0}) comes directly from (\ref{eq: to
compute integral of type II when b=0}).
\end{proof}

  \end{appendix}

\textbf{Acknowledgement}\\
It is a pleasure to thank Mark Adler and Dangzheng Liu for reading
an early version of this paper and for their suggestions. We also
thank the anonymous reviewer for the helpful comments.

\Addresses

\end{document}